\documentclass[11pt]{article}
\usepackage[ansinew]{inputenc}
\usepackage{graphicx}
\usepackage{color}
\usepackage{enumerate,latexsym}
\usepackage{latexsym}
\usepackage{amsmath,amssymb}
\usepackage{graphicx}
\usepackage[margin=1.4in]{geometry}
\newfont{\bb}{msbm10 at 11pt}
\newfont{\bbsmall}{msbm8 at 8pt}

\def\rth{\mathbb{R}^3}
\def\R{\mathbb{R}}
\def\g{\gamma}

\def\B{\mathbb{B}}
\def\N{\mathbb{N}}
\def\Z{\mathbb{Z}}

\def\esf{\mathbb{S}}
\def\T{\mathbb{T}^3}

\newcommand{\nc}{\newcommand}
\newcommand{\ben}{\begin{enumerate}}
\newcommand{\bit}{\begin{itemize}}
\newcommand{\een}{\end{enumerate}}
\newcommand{\eit}{\end{itemize}}
\newcommand{\wh}{\widehat}

\newcommand{\cC}{{\mathcal C}}
\newcommand{\cT}{\mathcal{T}}

\newcommand{\cR}{{\cal R}}
\newcommand{\cL}{{\cal L}}
\newcommand{\cH}{{\cal H}}

\newcommand{\wt}{\widetilde}

\newcommand{\bp}{\begin{proof}}
\newcommand{\ep}{\end{proof}}

\def\a{{\alpha}}
\def\ov{\overline}
\def\g{{\gamma}}
\def\G{{\Gamma}}
\def\l{{\lambda}}
\def\L{{\Lambda}}
\def\de{{\delta}}
\def\T{\mathbb{T}}

\def\ve{{\varepsilon}}

\def\centerbmp#1#2#3{\vskip#2\relax\centerline{\hbox to#1{\special
    {bmp:#3 x=#1, y=#2}\hfil}}}

\newtheorem{theorem}{Theorem}[section]
\newtheorem{lemma}[theorem]{Lemma}
\newtheorem{proposition}[theorem]{Proposition}

\newtheorem{remark}[theorem]{Remark}

\newtheorem{corollary}[theorem]{Corollary}
\newtheorem{definition}[theorem]{Definition}
\newtheorem{conjecture}[theorem]{Conjecture}
\newtheorem{assertion}[theorem]{Assertion}

\newtheorem{claim}[theorem]{Claim}

\newcommand{\ed}{\end{document}}
\nc{\bl}{\begin{lemma} }

\nc{\el}{\end{lemma} }

\nc{\bt}{\begin{theorem} }

\nc{\et}{\end{theorem} }
\newcommand{\rc}{ \renewcommand }

\rc{\v}{    \overset{\longrightarrow} }

\newenvironment{proof}{\smallskip\noindent{\it Proof.}\hskip \labelsep}
{\hfill\penalty10000\raisebox{-.09em}{$\Box$}\par\medskip}

\newcommand{\rrr}{\textcolor{rr}}
\definecolor{rr}{rgb}{.8,0,.3}

\begin{document}

\begin{title}
{Triply periodic constant mean curvature surfaces }
\end{title}
\begin{author}
{William H. Meeks III\thanks{The first author was supported in part by NSF Grant DMS-1309236.
Any opinions, findings, and conclusions or recommendations
   expressed in this publication are those of the authors and do not
   necessarily reflect the views of the NSF.} \and Giuseppe
   Tinaglia\thanks{ The second author was partially supported by EPSRC grant no. EP/M024512/1}
 }
\end{author}
\maketitle
\begin{abstract}
Given a closed flat 3-torus $N$, for each $H>0$ and each non-negative integer $g$,
we obtain area estimates for  closed surfaces with genus $g$ and  constant mean curvature
$H$ embedded in $N$.
This result contrasts with
the theorem of Traizet~\cite{tra5}, who proved  that every flat
3-torus  admits for every positive integer $g$ with $g\neq 2$,  connected closed embedded
minimal surfaces of genus $g$ with arbitrarily large area.
\par
\vspace{.1cm} \noindent{\it Mathematics Subject Classification:}
Primary 53A10, Secondary 49Q05, 53C42.

\vspace{.1cm} \noindent{\it Key words and phrases:} Minimal surface,
constant mean curvature,  $H$-lamination,
injectivity radius,  curvature estimates, area estimates.
\end{abstract}


\section{Introduction}

In~\cite{tra5} Traizet proved that every flat
3-torus  admits for every positive integer $g$ with $g\neq 2$,  connected closed embedded
minimal surfaces of genus $g$  with arbitrarily large area.
In contrast to Traizet's result
we prove the following area estimates for closed embedded surfaces of constant positive mean curvature
in a flat 3-torus.

\begin{theorem} \label{area} Given
$a,b,d,I_0\in (0,\infty)$ with $a\leq b$ and $g\in \N\cup\{0\}$, there exists
$A(g,a,b,d,I_0)>0$ such that the following hold. Let
$N$ be a flat 3-torus with an upper bound $d$ on its
diameter  and a lower bound $I_0$ for its injectivity radius and let
$M$ be a  possibly disconnected, closed
surface embedded in  $N$ of genus $g$ with constant
mean curvature  $H\in[a,b]$. Then:
     $$\mathrm{Area}(M)\leq A(g,a,b,d,I_0).$$
\end{theorem}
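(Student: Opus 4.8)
The plan is to argue by contradiction using a compactness argument, with the curvature estimates for embedded $H$-surfaces as the main analytic input. Suppose the conclusion fails for some fixed $g,a,b,d,I_0$. Then there exist flat $3$-tori $N_n$ with $\mathrm{diam}(N_n)\le d$ and injectivity radius at least $I_0$, together with closed embedded surfaces $M_n\subset N_n$ of genus $g$ and constant mean curvature $H_n\in[a,b]$, such that $\mathrm{Area}(M_n)\to\infty$. Since the flat $3$-tori satisfying the diameter and injectivity-radius bounds form a compact family, after passing to a subsequence I may assume $N_n\to N_\infty$ in the smooth topology and $H_n\to H_\infty\in[a,b]$, where $N_\infty$ is again a flat $3$-torus.

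First I would invoke the uniform curvature estimates for closed embedded $H$-surfaces to obtain a bound $\sup_{M_n}|A_{M_n}|\le C_1$ with $C_1=C_1(a,b,d,I_0)$ independent of $n$; here it is essential that $H_n\ge a>0$. With uniformly bounded second fundamental form and uniform local embeddedness, the $M_n$ have uniformly bounded local geometry, so a further subsequence converges on compact subsets of $N_\infty$, with some multiplicity, to an $H_\infty$-lamination $\mathcal L$ of $N_\infty$ by surfaces of bounded curvature. Along the way I would bound the number of components of $M_n$: each genus-zero component is simply connected, hence lifts to a closed embedded $H_n$-sphere in $\mathbb R^3$, so by Alexandrov it is a round sphere of radius in $[1/b,1/a]$ enclosing a definite volume, and the volume bound on $N_n$ limits their number; the components of positive genus number at most $g$. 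Thus $|\chi(M_n)|$ stays bounded.

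The heart of the argument is to show that the convergence $M_n\to\mathcal L$ has multiplicity one and that $\mathcal L$ has no limit leaves, for then $\mathcal L$ is a finite disjoint union of closed $H_\infty$-surfaces in the compact manifold $N_\infty$, the convergence is smooth and graphical, and $\mathrm{Area}(M_n)\to\mathrm{Area}(\mathcal L)<\infty$, contradicting $\mathrm{Area}(M_n)\to\infty$. Both statements follow from the fact that a limit leaf, or a leaf $L$ to which the convergence has multiplicity at least two, carries a positive Jacobi field: the rescaled separation between collapsing sheets converges to a positive solution $u$ of $\Delta u+|A|^2u=0$ on the closed surface $L$ (recall $\mathrm{Ric}\equiv 0$ on the flat $N_\infty$). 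Integrating this equation over $L$ gives $\int_L|A|^2u=0$, but on an $H_\infty$-surface with $H_\infty>0$ one has $|A|^2\ge 2H_\infty^2>0$, so no positive $u$ exists. This is precisely where the hypothesis $H>0$ is used, and where the situation departs from the minimal case, in which the totally geodesic flat tori are stable and do arise as multiplicity leaves, as in Traizet's examples~\cite{tra5}.

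The main obstacle is the first step: establishing the uniform curvature estimate $\sup_{M_n}|A_{M_n}|\le C_1$ for closed embedded $H$-surfaces, and then extracting the limit lamination rigorously, including the verification that the limit is genuinely laminar and that the separation between collapsing sheets converges, after rescaling, to an honest positive Jacobi field on the limit leaf. Controlling how the fixed genus is distributed among, and constrains, the leaves of $\mathcal L$, so that the limit really is a finite union of closed surfaces, is the technical crux; the remaining maximum-principle and Gauss--Bonnet bookkeeping is routine by comparison.
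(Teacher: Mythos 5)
Your first step --- the assertion of a uniform curvature bound $\sup_{M_n}|A_{M_n}|\le C_1(a,b,d,I_0)$ for closed embedded $H$-surfaces of fixed genus --- is exactly the step that fails, and it is not a technical obstacle that can be patched: it is false in the generality you need. The only curvature estimate available (Theorem~\ref{cest}) applies to $H$-\emph{disks} at definite intrinsic distance from their boundary, so to use it on a closed surface you first need a lower bound on the injectivity radius of $M_n$. The paper proves the opposite: in Claim~\ref{injec} it shows that if $\mathrm{Area}(M_n)\to\infty$ then $I(M_n)\to 0$, and the compactness theorem (Theorem~\ref{seq-compact}, Proposition~\ref{singnogenus}) explicitly allows a nonempty finite singular set $\Delta$ at which catenoidal necks form and $|A_{M_n}|\to\infty$. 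So your hypothesis rules out by fiat precisely the degeneration that the theorem must confront. (Your lamination step also quietly assumes the limit leaves are closed so that you can integrate the Jacobi equation; for a general lamination you would instead need the nonexistence of complete stable $H$-surfaces in $\rth$, but this is secondary.)

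The actual proof spends Sections~4--8 handling the curvature blow-up that you assume away. It shows that where the injectivity radius degenerates, rescalings converge to a catenoid, a Riemann minimal example, a parking garage structure, or a positive-genus minimal surface; it eliminates the Riemann and parking-garage cases by a topological argument (Lemma~\ref{genus}: more than $3g-2$ disjoint homotopically nontrivial loops force two of them to bound an annulus, which is then excluded by a Plateau/mean-convex-hull argument); it bounds the number of catenoid singular points via Alexandrov reflection, which shows each pair of singular loops bounding an annulus traps a definite volume inside the mean convex region $G_{M_n}$; and finally it controls the area of $M_n$ in small balls around the finitely many singular points using the monotonicity formula, reducing it to the area in annular regions where the convergence is smooth. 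Your contradiction-plus-compactness framework and your component count are consistent with the paper's, and your Jacobi-field observation correctly identifies why $H>0$ matters, but without the blow-up analysis at singular points the argument does not close.
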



Interest in results like the above  area estimates  arises in part  from
the fact that triply periodic surfaces of constant mean curvature in $\rth$, which are always the lifts of
surfaces of
constant mean curvature in a flat 3-torus, occur in nature.
For example, approximations to these
special surfaces appear
in studies
of Fermi surfaces (equipotential surfaces) in solid state
physics and in the geometry  of liquid crystals.
They are also found in material sciences where they  closely approximate surface interfaces
in certain inhomogeneous mixtures of two different compounds and
as the boundary of the microscopic  calcium deposit patterns  in
sea urchin shells. The geometry of
triply-periodic constant mean curvature surfaces
arising in nature
have profound consequences for the physical properties of the materials in which they occur,
which is one of the  reasons why they are studied in great detail by physical scientists.
We refer the interested reader to the science articles \cite{aht2,mac1,scr1} for further
  readings along these lines.

For the purpose of exposition, in this paper we will call an orientable immersed surface
$M$ in an oriented Riemannian 3-manifold an {\it $H$-surface} if $M$  is
{\it embedded} and it
has {\it non-zero constant mean curvature $H$}, where we will assume
that $H$ is  positive by appropriately orienting $M$.

A consequence of Theorem~\ref{area} and its proof is that for $H>0$ fixed,  the moduli
space of  closed connected $H$-surfaces  of fixed  genus  in a flat 3-torus  has a
natural compactification as a compact real semi-analytic variety
in much the same way that one can compactify the
moduli space of closed connected Riemann surfaces of fixed genus.
In this regard it is worthwhile to
compare our results with  the area estimates
of Choi and Wang~\cite{cw1} for closed embedded minimal surfaces of fixed genus in
the 3-sphere $\langle \esf^3, h\rangle$ with a metric $h$ of positive
Ricci curvature, and with the  result by Choi and Schoen~\cite{cs1} that
the moduli space of fixed genus closed minimal surfaces embedded in $\langle \esf^3, h\rangle$
has the structure of  a compact real analytic variety.

\begin{theorem} \label{seq-compact}
Let $N$ be a flat 3-torus. Given
$a,b\in (0,\infty)$ with $a\leq b$ and $g\in \N\cup\{0\}$, let $\{M_n\}_{n\in \N}$ be a
sequence of closed $H_n$-surfaces  in
$N$, $H_n\in[a,b]$, of genus $g$. Then there exist a
subsequence of  $\{M_n\}_{n\in \N}$  and a non-empty,
possibly disconnected, strongly Alexandrov
embedded\footnote{A closed immersed surface $f\colon \Sigma\to N$ of
positive mean curvature in  $N$
is called {\em strongly Alexandrov embedded} if $f$ extends  on the
mean convex side of its image to an immersion of a compact 3-manifold $W$
with $\Sigma=\partial W$, where the
extended immersion
is injective on the interior of $W$. In particular, such a
strongly Alexandrov embedded $\Sigma$ can
be approximated by  embedded surfaces on its mean convex side.}
surface $M_{\infty}$ of constant  mean curvature  $H_{\infty}\in [a,b]$ and of
 genus at most $g$, such that the following holds.
The surfaces in this  subsequence converge smoothly with multiplicity one to $M_{\infty}$ away
from a finite set of points $\Delta$ (of singular points of convergence)
contained in the self-intersection set
of $M_{\infty}$.\end{theorem}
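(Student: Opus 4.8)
The plan is to run a Choi--Schoen type compactness argument, using Theorem~\ref{area} to supply the area bound that their scheme takes as a hypothesis, and exploiting the flatness of $N$ to upgrade this into a uniform bound on the total second fundamental form. First I would normalize the topology: each component of $M_n$ is a closed embedded $H_n$-surface, so by the monotonicity formula together with $H_n\leq b$ each component has area at least some $c(b)>0$ (a round sphere of mean curvature $H_n$ is the model); combined with $\mathrm{Area}(M_n)\leq A(g,a,b,d,I_0)$ from Theorem~\ref{area}, this bounds the number of components, and hence $\chi(M_n)$, uniformly. Since $N$ is flat, the Gauss equation reads $|A_{M_n}|^2=4H_n^2-2K$, where $K$ is the intrinsic Gauss curvature, so by Gauss--Bonnet $\int_{M_n}|A_{M_n}|^2\,dA=4H_n^2\,\mathrm{Area}(M_n)-4\pi\chi(M_n)$. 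The right-hand side is bounded above by a constant depending only on $a,b,g,d,I_0$, which is the uniform total curvature bound driving the rest of the argument.

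Next I would localize the singular set. By the $\ve$-regularity estimate for surfaces of uniformly bounded mean curvature (the Choi--Schoen estimate, which extends to this setting because $H_n\in[a,b]$ and $N$ is smooth with injectivity radius $I_0$), there is $\ve_0>0$ such that $\int_{B(p,r)\cap M_n}|A_{M_n}|^2<\ve_0$ forces a pointwise curvature bound on $B(p,r/2)$. I define $\Delta$ to be the set of points at which at least $\ve_0$ of the total curvature concentrates along a subsequence; the uniform bound $\int_{M_n}|A_{M_n}|^2\le C$ shows $\#\Delta\le C/\ve_0<\infty$. Away from $\Delta$ the curvatures are locally uniformly bounded, so elliptic estimates and Arzel\`a--Ascoli yield a subsequence converging in $C^{\infty}_{\mathrm{loc}}$ on $N\setminus\Delta$ to a smooth surface of constant mean curvature $H_\infty=\lim H_n\in[a,b]$. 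The \emph{multiplicity one} assertion is the first genuinely constant-mean-curvature input: if two sheets of $M_n$ collapsed onto a common limit sheet, then, orienting by the mean convex side, these would be constant mean curvature graphs over one another whose mean curvature vectors are forced into a fixed configuration, and since $H_\infty\ge a>0$ the strong maximum principle for the quasilinear mean curvature operator prohibits two distinct such $H$-graphs from becoming arbitrarily close. Hence the convergence has multiplicity one.

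It remains to understand $M_\infty$ near $\Delta$ and to record its global structure. At a point $p\in\Delta$ a neck of $M_n$ pinches or two sheets become tangent; by the area (density) bound the limiting varifold has integer density and bounded generalized mean curvature at $p$, so Allard-type regularity shows that the smooth limit on $N\setminus\Delta$ extends across $p$ to a smooth \emph{immersed} $H_\infty$-surface $M_\infty$, with the sheets through $p$ meeting there, so that $p$ lies in the self-intersection set of $M_\infty$ as claimed. To see that $M_\infty\neq\varnothing$ I rule out total collapse into $\Delta$: if $\mathrm{diam}(M_n)\to 0$ then, since $N$ has injectivity radius $I_0>0$, eventually $M_n$ lies in a Euclidean ball, and rescaling to unit diameter produces closed surfaces in $\rth$ with mean curvature $H_n\,\mathrm{diam}(M_n)\to 0$; but every closed surface in $\rth$ has a point of mean curvature bounded below in terms of its diameter, contradicting constancy. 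Thus $\mathrm{diam}(M_n)\ge\delta_0(b)>0$, a definite part of $M_n$ survives in $N\setminus\Delta$, and $M_\infty$ is non-empty. Finally, the mean convex solid regions $W_n$ with $\partial W_n=M_n$ converge to an immersion of a compact $3$-manifold $W$ with $\partial W=M_\infty$ that is injective on the interior, which is exactly strong Alexandrov embeddedness; and since genus cannot be created under smooth multiplicity-one convergence and can only be lost at the pinches comprising $\Delta$, the genus of $M_\infty$ is at most $g$.

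The main obstacle, to my mind, is twofold, and both difficulties are special to the constant-mean-curvature rather than the minimal setting: first, proving that the convergence is of multiplicity one, which requires exploiting the definite sign of $H$ through the maximum principle and not merely a curvature bound; and second, establishing the regularity and strong Alexandrov embeddedness of $M_\infty$ across the pinch set $\Delta$, where one must pass the embeddedness of the $M_n$ to a controlled immersion of the limiting solid region and verify that no genuine singularity of $M_\infty$ forms there.
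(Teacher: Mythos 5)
Your overall scheme is workable and, in one important respect, genuinely different from the paper's. You produce the finite singular set $\Delta$ by the Choi--Schoen route: the area bound of Theorem~\ref{area} plus Gauss--Bonnet gives $\int_{M_n}|A_{M_n}|^2\leq C$, and an $\ve$-regularity statement for surfaces of bounded mean curvature localizes the bad set to at most $C/\ve_0$ concentration points. The paper instead defines $\Delta$ as the set where the injectivity radius of $M_n$ degenerates, classifies the blow-up limits there (catenoids, after ruling out Riemann examples and parking garages via Lemma~\ref{genus} and Plateau arguments), and bounds $\#\Delta$ by a purely topological/volumetric count (singular loops, Alexandrov reflection, disjoint regions of definite volume); notably that count does not use the area bound (Remark~\ref{rmkboundsing}), whereas your count does. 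Since the area bound is available to you from Theorem~\ref{area}, your route to finiteness of $\Delta$ is legitimate, and indeed the paper itself uses the same Gauss--Bonnet total-curvature bound in its removable-singularity step.

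There are, however, two concrete gaps. First, your mechanism for multiplicity one does not work as stated: the strong maximum principle does not prevent two \emph{disjoint} $H$-graphs from becoming arbitrarily close (the difference of two solutions of the graphical CMC equation satisfies a linear elliptic equation, and Harnack is perfectly consistent with a positive gap function tending to zero). The correct obstruction, which is the paper's argument in Claim~\ref{convalex}, is that $M_n$ is embedded and bounds the mean convex region $G_{M_n}$, so consecutive sheets collapsing onto a common limit must carry \emph{oppositely oriented} mean curvature vectors; the limit would then fail to have a well-defined mean curvature vector of length $H_\infty\geq a>0$. You gesture at the mean convex side but then invoke the wrong tool. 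Second, Allard regularity cannot give you the extension of $M_\infty$ across a point $q\in\Delta$: the density of the limit varifold at such a point is $2$ (two catenoid ends collapsing onto tangent sheets), and Allard's theorem only applies where the density is close to $1$. The paper instead derives $|A_{M_\infty}|(p)\leq C_b/d_N(p,\Delta)$ from the total curvature bound by rescaling and then applies a removable-singularity theorem for weak $H$-laminations (Theorem~1.2 in~\cite{mpr21}). Relatedly, your claim that ``the sheets through $p$ meet there'' — i.e.\ that $\Delta$ lies in the self-intersection set of $M_\infty$ — is asserted rather than proved; the paper establishes it by a quantitative density comparison (the monotonicity formula forces the annular density near a singular point to exceed what a single embedded sheet can carry, see the argument surrounding equation~\eqref{genuszero} and Proposition~\ref{singnogenus}). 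These two steps are exactly where the real content of the theorem lies, so they need correct arguments rather than the ones proposed.
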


Suppose that $\{M_n\}_{n\in \N}$ is the subsequence that converges to the
limit surface $M_\infty$ in the above theorem with singular
set of convergence $\Delta$. The {\em convergence with multiplicity one} property
 implies that the areas of $M_n$ converge to the area
of the limit surface.
In a sense that is made clear  in Proposition~\ref{singnogenus},
nearby every  $q\in \Delta$ (on the scale of the injectivity radius), the surface $M_n$ contains
domains that look like a scaled catenoid for $n$ large.  It also follows from  Proposition~\ref{singnogenus} that
$M_\infty $ has  area density 2 at each  $q \in \Delta$.
We conjecture
 that for every $q\in \Delta$ there exists $\ve>0$ small such that
$B_N(q,\ve)\cap M_n$ is an annulus  for $n$ sufficiently  large, where $B_N(q,\ve)$ denotes the open ball of $N$ centered at $q$
of radius $\ve$ (see Conjecture~\ref{genus-0}).

The manuscript is organized as follows. In Section~2 we list a few results from other papers that will be
needed in the proof of Theorem~\ref{area}. In Section 3 we
describe some separation properties of certain $H$-surfaces
in a complete flat 3-manifold. In Sections 4, 5, 6, 7 and 8 we prove a version of Theorem~\ref{area},
where the area bound depends in a non-specified manner on the ambient 3-torus $N$. Arguing
by contradiction, we suppose there exists a sequence
$\{M_n\}_{n\in \N}$ of closed  $H_n$-surfaces, $H_n\in [a,b]$, of genus $g$ with
arbitrary large area in $N$. Then, in Section~4 we prove that, after replacing by a subsequence,
the injectivity radii of the surfaces $M_n$ must be going to zero. In Section~5 we analyze the local
geometry of $M_n$ nearby a point where the injectivity radius is becoming
arbitrarily small, such points are called {\em singular points}.
In Section~6 we analyze a global consequence for the geometry of $M_n$ if there
exist singular points. In Section~7 we use this analysis to prove that the number
of singular points is finite. In Section 8 we apply the fact that the number of
singular points is finite to obtain a contradiction
to the assumed  hypothesis that the area is becoming arbitrarily large.
In Section~9 we prove the compactness theorem, namely
Theorem~\ref{seq-compact}, as well as a stronger characterization of the
geometry of $M_n$ in a small neighborhood
of singular point, when $n$ is sufficiently large.
In Section~10 we analyze the constant $A(g,a,b,d,I_0)$
that gives the area bound in Theorem~\ref{area} and complete its proof.
In Section~11  we discuss the dependence of $A(g,a,b,d,I_0)$
on the genus $g$, on the lower and upper bounds $a$ and $b$ for the constant mean curvature $H$
and on the diameter $d$ of the flat 3-torus, under the assumption that $I_0$ is fixed.
In Section~12, we  promote several conjectures related to our main theorems.

\vspace{.2cm}

\noindent  {\sc Acknowledgements:} The authors would
like to thank Karsten  Gro\ss e-Brauckmann
for giving us the  images of triply periodic
$H$-surfaces that appear in Figure~\ref{star}.

\section{Preliminaries}  \label{sec:pre}
We now recall several key notions and
theorems from \cite{mt15} that we will apply in the next section.
\begin{definition} \label{definitions} {\rm Let $M$ be an $H$-surface,
possibly with boundary, in a complete oriented
Riemannian 3-manifold $N$. \ben \item $M$  is an {\em $H$-disk}\,
if $M$ is diffeomorphic to a closed
 disk in the complex plane.
\item $|A_{M}|$ denotes the {\em norm of the second fundamental form} of  $M$.
\item The {\em radius} of a  Riemannian $n$-manifold with boundary is the supremum of the
intrinsic distances of points in the manifold to its boundary. \een }
\end{definition} \vspace{.1cm}

The  next two
 results are contained in~\cite{mt15}, see also~\cite{mt8,mt7,mt13,mt14,mt9}.

\begin{theorem}[Radius Estimates] \label{rest} There exists an ${\cal R}\geq \pi$
such that any $H$-disk in $\rth$ has radius less than ${\cal R}/{H}$.
\end{theorem}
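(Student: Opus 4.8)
The plan is to argue by contradiction after reducing to a normalized scale. First I would exploit the scaling invariance of the problem: if $M$ is an $H$-disk in $\rth$, then the rescaled surface $H\cdot M$ (its image under the homothety of ratio $H$ centered at the origin) is a $1$-disk whose radius equals $H$ times the radius of $M$. Hence it suffices to produce a universal constant $\mathcal{R}\geq\pi$ bounding the radius of every $1$-disk. The lower bound $\mathcal{R}\geq\pi$ is forced and not merely cosmetic: a round sphere of mean curvature $1$ has intrinsic diameter $\pi$, so removing a small disk from it yields $1$-disks of radius arbitrarily close to $\pi$. Thus no smaller constant can work, and the real content of the theorem is the existence of a finite upper bound at all.

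Suppose no such $\mathcal{R}$ exists. Then there is a sequence of $1$-disks $M_n\subset\rth$ together with points $p_n\in M_n$ satisfying $d_{M_n}(p_n,\partial M_n)=r_n\to\infty$, where $d_{M_n}$ is the intrinsic distance. After translating so that $p_n=\vec 0$, I would study the intrinsic balls $B_{M_n}(p_n,r_n)$, which are again $1$-disks lying at intrinsic distance $r_n$ from their boundary. The heart of the matter is to control the extrinsic geometry of these balls on a fixed scale. Here I would invoke the two engines of the theory of embedded $H$-disks developed in the Meeks--Tinaglia program: a one-sided curvature estimate (an $H$-disk lying on one side of a plane and passing near a point of that plane has controlled second fundamental form there) together with the associated intrinsic--extrinsic chord-arc estimate. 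These prevent the disks from folding or spiralling on a unit scale and force any geometric degeneration to concentrate in a controlled manner.

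From this control I would run a blow-up dichotomy. If $\sup_n\sup_{B_{M_n}(p_n,\,r_n/2)}|A_{M_n}|=\infty$, I would rescale at points of near-maximal curvature via a standard point-selection argument; since the mean curvatures $H_n=1$ then rescale to zero, the limit is a complete, non-flat, embedded \emph{minimal} surface of bounded curvature, a configuration that the one-sided curvature estimate and the structure theory for such blow-up limits are designed to preclude. If instead the curvatures are uniformly bounded on half-balls, then by the local area bounds for embedded $H$-surfaces and standard elliptic compactness the $M_n$ subconverge to a complete embedded limit object $M_\infty$ through $\vec 0$, a priori an $H$-lamination every leaf of which has constant mean curvature $1$, arising as a limit of simply connected disks of unbounded radius. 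The final step is to show that such an $M_\infty$ cannot exist.

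The main obstacle is precisely this last classification step: ruling out a complete embedded limit of bounded geometry built from $1$-disks of unbounded intrinsic radius. No leaf can be a stable minimal surface, since $H=1\neq 0$, and the simply connected structure inherited from the $M_n$, combined with properness and the one-sided estimate, should force $M_\infty$ to contain a complete embedded simply connected $H$-surface of bounded curvature and infinite radius, contradicting the non-existence of such surfaces in $\rth$. Making this rigorous is delicate because limits of disks need not be disks: $M_\infty$ may be a nontrivial lamination with a nonempty singular set of convergence, so the argument must carefully track leaf topology and the location of singular points. This is the step where the full weight of the lamination machinery for $H$-surfaces is required, and it is where I expect the genuine difficulty to lie.
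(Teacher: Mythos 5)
The paper does not actually prove Theorem~\ref{rest}: it is imported verbatim from~\cite{mt15} (``The next two results are contained in~\cite{mt15}''), so there is no in-paper argument to compare yours against. Judged on its own terms, your proposal is a strategy outline in the spirit of the Meeks--Tinaglia program rather than a proof. The scaling reduction to $H=1$ and the sharpness remark (spherical caps of mean curvature $1$ yield $1$-disks of radius approaching $\pi$, forcing $\mathcal{R}\geq\pi$) are both correct. But every load-bearing step is deferred to machinery you invoke by name without establishing it: the one-sided curvature estimate for $H$-disks, the intrinsic--extrinsic chord-arc estimate, the point-selection blow-up analysis, and the lamination compactness theory. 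These are precisely the theorems whose proofs occupy the cited papers; none of them is elementary, and none is derived in your sketch.

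More seriously, the final step as you state it is circular. You propose to conclude by showing that $M_\infty$ contains a complete embedded simply connected $H$-surface of bounded curvature and infinite radius, ``contradicting the non-existence of such surfaces in $\rth$.'' But that non-existence statement is essentially equivalent to the radius estimate itself: a complete simply connected $H$-surface of infinite radius is exactly the limit object that a sequence of $H$-disks of unbounded radius would produce, and ruling it out is the deep content of~\cite{mt15}, not an available black box. You acknowledge that this is ``where I expect the genuine difficulty to lie,'' which is an honest assessment --- the entire weight of the theorem is concentrated in the one step you have not carried out. As a proof the proposal therefore has a genuine gap; as a roadmap it correctly identifies the shape of the argument used in the literature.
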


\begin{theorem}[Curvature Estimates] \label{cest} Given $\delta,\cH>0$, there exists
a  $K(\delta,\cH)\geq \sqrt 2\cH$ such that for any $H$-disk ${\cal D}\subset \rth$ with $H\geq \cH$,
$${\large{\LARGE \sup}_{\large \{p\in {\cal D} \, \mid \,
d_{\cal D}(p,\partial {\cal D})\geq \delta\}} |A_{\cal D}|\leq  K(\delta,\cH)}.$$
\end{theorem}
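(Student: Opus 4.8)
The plan is to prove the estimate by contradiction through a blow-up (rescaling) argument, with Theorem~\ref{rest} doing the decisive work at two points. I would begin with two reductions. First, scaling $\rth$ by the factor $\cH$ sends an $H$-disk to an $(H/\cH)$-disk, multiplies intrinsic distances by $\cH$ and $|A|$ by $1/\cH$; this yields the relation $K(\delta,\cH)=\cH\,K(\cH\delta,1)$, so it suffices to treat $\cH=1$, i.e.\ $H\ge 1$. Second, Theorem~\ref{rest} makes the statement vacuous for large $H$: an $H$-disk has radius $<\cR/H$, so once $H\ge\cR/\delta$ there is no point at intrinsic distance $\ge\delta$ from $\partial\cD$ and nothing to prove. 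Hence I only need a bound for $H$-disks with $1\le H<\cR/\delta$; in particular $H$ ranges over a bounded interval. (The normalization $K\ge\sqrt2\,\cH$ reflects the pointwise identity $|A|^2=4H^2-2K_{\mathrm{Gauss}}\ge 2H^2$, so that $|A|\ge\sqrt2\,H$ always.)

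Suppose the bound fails for some such $\delta$ and $\cH=1$. Then there are $H_n$-disks $\cD_n\subset\rth$ with $H_n\in[1,\cR/\delta)$ and points $p_n$ with $d_{\cD_n}(p_n,\partial\cD_n)\ge\delta$ but $|A_{\cD_n}|(p_n)\to\infty$. I would run a standard point-selection argument to replace $p_n$ by points $q_n$ with $\lambda_n:=|A_{\cD_n}|(q_n)\to\infty$, still satisfying $d_{\cD_n}(q_n,\partial\cD_n)\ge\delta/2$, and with $\sup_{B_{\cD_n}(q_n,\,R_n/\lambda_n)}|A_{\cD_n}|\le 2\lambda_n$ for some $R_n\to\infty$. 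Rescaling, $\widetilde\cD_n:=\lambda_n(\cD_n-q_n)$ is a $\widetilde H_n$-disk with $\widetilde H_n=H_n/\lambda_n$, it has $|A|(0)=1$ and $|A|\le 2$ on the intrinsic ball $B(0,R_n)$, and its center sits at intrinsic distance $\ge\lambda_n\delta/2\to\infty$ from the boundary. Crucially, since $H_n$ is bounded while $\lambda_n\to\infty$, we get $\widetilde H_n\to 0$.

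From the uniform bound $|A|\le 2$ on intrinsic balls of radius $R_n\to\infty$, together with embeddedness, I would extract a subsequence converging smoothly and with multiplicity one to a complete, embedded surface $\Sigma_\infty\subset\rth$ through the origin with $|A_{\Sigma_\infty}|(0)=1$, $|A|\le 2$, and constant mean curvature $\lim\widetilde H_n=0$. Thus $\Sigma_\infty$ is a complete, embedded, \emph{non-flat minimal} surface of bounded curvature, arising as a multiplicity-one limit of the disks $\widetilde\cD_n$; when $\Sigma_\infty$ is simply connected it must be a helicoid by the uniqueness theorem for the helicoid. The point of the two reductions is that the blow-up limit is always minimal, so minimal-surface technology becomes available to finish.

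The main obstacle is precisely to rule out this minimal limit, and it is why the curvature estimate lies deeper than the radius estimate: Theorem~\ref{rest} does not by itself forbid it, since in the rescaled picture the radius bound $\cR/\widetilde H_n=\lambda_n\cR/H_n$ diverges, entirely consistent with a complete helicoidal limit. To close the argument I would have to use that the $\widetilde\cD_n$ carry \emph{positive} mean curvature $\widetilde H_n>0$: the helicoidal (parking-garage) structure of $\Sigma_\infty$ forces each $\widetilde\cD_n$ to contain an $N_n$-valued graph with $N_n\to\infty$ over a fixed annulus, whose sheets accumulate to a limit leaf of a minimal lamination. Invoking the stable-limit-leaf theorem, this leaf is a stable minimal surface, hence a plane, and the one-sided curvature estimate then caps $|A|$ near it uniformly, contradicting $|A|(0)=1$, i.e.\ contradicting $\lambda_n\to\infty$. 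Carrying out this step rigorously, and in particular controlling the interaction between the positive mean curvature and the collapsing multigraph, is where essentially all of the difficulty resides.
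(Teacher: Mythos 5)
First, a point of comparison: the paper does not prove Theorem~\ref{cest} at all --- it is imported from~\cite{mt15} (with \cite{mt8,mt7,mt13,mt14,mt9} as supporting references), so there is no internal proof to measure yours against; the result is one of the principal theorems of that series of papers and its proof occupies most of them. Your preliminary reductions are correct and genuinely useful: the scaling identity $K(\delta,\cH)=\cH\,K(\cH\delta,1)$, the observation that Theorem~\ref{rest} makes the statement vacuous once $H\geq \cR/\delta$ so that $H$ may be confined to the bounded interval $[1,\cR/\delta)$, and the consequence that any blow-up limit on the scale of the second fundamental form is a complete non-flat \emph{minimal} surface with $|A|(0)=1$ and $|A|\leq 2$.

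The argument nonetheless has a genuine gap exactly where you locate ``all of the difficulty,'' and the one concrete mechanism you propose for closing it does not work. If $\widetilde{\cD}_n$ converges smoothly with multiplicity one on compact sets to a helicoid, nothing contradictory has yet occurred: over a fixed annulus and inside a fixed extrinsic ball a helicoid has a bounded number of sheets, so no $N_n$-valued graphs with $N_n\to\infty$ are produced, no limit leaf of a lamination appears, and the stable-limit-leaf theorem and the one-sided curvature estimate have nothing to act on. Indeed this exact configuration is realized by rescalings of a fixed helicoid, which is why the curvature estimate is \emph{false} for minimal disks ($H=0$); any correct proof must therefore make the hypothesis $H\geq 1$ at the original scale enter quantitatively, and in your sketch that information is discarded when $\widetilde H_n\to 0$. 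There are secondary omissions as well --- the blow-up limit of a sequence of disks need not be simply connected, so the helicoid is not the only case to exclude, and embeddedness and multiplicity one of the limit require justification --- but the essential point is that ``rule out a non-flat minimal blow-up limit of $H$-disks'' is not a final step: it is the entire theorem. In~\cite{mt15} this is accomplished through an extended interplay between the radius estimate, extrinsic one-sided curvature estimates for $H$-disks, chord-arc and multivalued-graph analysis, and the positivity of the mean curvature and flux; none of that machinery is replaced by anything in your proposal.
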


The following notions of injectivity radius function and injectivity radius are needed in
the statement and the proof of Theorem~\ref{area}.
\begin{definition} \label{Inj} {\em
The injectivity radius $I_M(p)$ at a point $p$ of a complete Riemannian manifold $M$ is
the supremum of the  radii $R$ for which the exponential
map on the open ball of radius $R$ in $T_pN$ is a diffeomorphism. This defines
the {\em injectivity radius function}, $I_M\colon M\to (0,\infty ]$, which is continuous
on $M$ (see e.g., Proposition~88 in Berger~\cite{ber1}). The infimum of
$I_M$ is called the {\em injectivity radius} of $M$.
}
\end{definition}


\begin{corollary} \label{corinj} If $M$ is a
complete  $H$-surface in $\rth$ with positive injectivity radius ${r_0}$, then
 $$\sup_M|A_M|\leq K(r_0,H).$$
Furthermore,  such an $M$ is properly embedded in $\rth$ and it
is the oriented  boundary of a smooth,
possibly disconnected, mean convex closed domain $G_M$ in $\rth$ which
has a 1-sided regular $\ve$-neighborhood for its
boundary for some $\ve>0$.
\end{corollary}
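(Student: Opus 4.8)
The plan is to establish the three assertions in turn: the pointwise curvature bound follows essentially directly from the Curvature Estimates (Theorem~\ref{cest}) applied to intrinsic balls, and the resulting bounded geometry, combined with the constant-mean-curvature condition, yields both properness and the one-sided regular neighborhood.

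First I would prove the curvature bound. Fix $p\in M$ and any $r<r_0$. Since the injectivity radius of $M$ is at least $r_0$, the exponential map $\exp_p$ is a diffeomorphism on the ball of radius $r$ in $T_pM$, so the closed intrinsic ball $\cD:=\ov{B_M(p,r)}$ is diffeomorphic to a closed disk; as a subset of the embedded surface $M$ it is embedded in $\rth$ and inherits constant mean curvature $H$, hence it is an $H$-disk. Moreover $d_{\cD}(p,\partial\cD)\geq d_M(p,\partial\cD)=r$, since $\cD\subset M$ and geodesics from $p$ realize distance inside the injectivity radius. Thus $p$ lies in the interior region to which Theorem~\ref{cest} applies with $\delta=r$ and $\cH=H$, giving $|A_M|(p)=|A_{\cD}|(p)\leq K(r,H)$ for every $r<r_0$. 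Passing to the limit $r\uparrow r_0$ (using continuity of $K(\cdot,H)$, or simply replacing $r_0$ by any slightly smaller value) gives $|A_M|(p)\leq K(r_0,H)$, and since $p$ is arbitrary, $\sup_M|A_M|\leq K(r_0,H)$.

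Write $C:=K(r_0,H)$ for this uniform bound. Standard bounded-geometry estimates then furnish a radius $\rho=\rho(C)>0$ such that, for every $p\in M$, the component of $M\cap B_{\rth}(p,\rho)$ through $p$ is a graph over the $\rho$-disk in $T_pM$ with $C^2$-norm controlled by $C$ alone. To prove that $M$ is properly embedded I would argue by contradiction: if not, there are a point $q\in\rth$ and points $p_n\in M$ converging to $q$ in $\rth$ while lying on sheets that are pairwise far apart intrinsically, so that infinitely many nearly parallel graphical sheets of $M$ accumulate near $q$. Two distinct sheets that are very close and nearly parallel bound a thin region; comparing $M$ with a sphere of radius $1/H$ (whose mean curvature is exactly $H$) and invoking the maximum principle for the constant-mean-curvature equation shows that two embedded sheets of an $H$-surface cannot come arbitrarily close on their mean convex sides. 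This contradicts the accumulation and proves properness. The constant-mean-curvature hypothesis is essential here, since bounded curvature and positive injectivity radius alone do not force an embedded surface in $\rth$ to be proper.

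Being a properly embedded orientable surface in $\rth$, $M$ separates $\rth$ and bounds a closed domain $G_M$; orienting $M$ by its mean curvature vector so that it points into $G_M$ makes $G_M$ mean convex with $M=\partial G_M$. For the one-sided regular neighborhood, consider the normal flow $\Phi(x,t)=x+t\,\nu(x)$ into $G_M$, where $\nu$ is the inward unit normal. The bound $|A_M|\leq C$ controls the principal curvatures and hence gives a positive lower bound $t_0=t_0(C)$ on the focal distance on the mean convex side, so $\Phi$ is a local diffeomorphism for $t<t_0$; and the sphere-comparison separation from the previous paragraph shows that distinct points of $M$ are carried to distinct points by $\Phi$ for $t<t_1$ for some uniform $t_1>0$. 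Taking $\ve=\tfrac12\min\{t_0,t_1\}$ produces the required smooth one-sided regular $\ve$-neighborhood of $\partial G_M=M$, possibly with $G_M$ disconnected.

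I expect the genuine difficulty to lie in the separation estimate underlying the second and third paragraphs, namely quantifying how close two distinct embedded sheets of an $H$-surface can come on the mean convex side, since the curvature bound of the first paragraph is nearly immediate from Theorem~\ref{cest}. This is precisely the step where embeddedness, the constant-mean-curvature condition, and comparison with spheres of radius $1/H$ must be combined, and it is what distinguishes $H$-surfaces from general surfaces of bounded geometry.
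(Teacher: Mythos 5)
Your first paragraph matches what the paper intends (it treats the curvature bound as an immediate consequence of Theorem~\ref{cest} applied to intrinsic disks), and your overall strategy for properness and the one-sided neighborhood is in the spirit of the results the paper simply cites: the paper does not reprove these facts but quotes \cite{mr8} for properness of complete connected $H$-surfaces with bounded second fundamental form and for the existence of a regular $\ve$-neighborhood on the mean convex side (see also Theorem~3.5 in \cite{mt3}). So where the paper's proof is three citations plus a separation argument, you are attempting to reprove the cited inputs; that is legitimate in principle, but your sketch of the properness step is incomplete. The comparison-with-spheres argument you describe rules out two nearly parallel sheets whose mean curvature vectors both point \emph{into} the thin slab between them; it does not address the case where accumulating graphical sheets all have mean curvature vectors pointing to the \emph{same} side (so that each sheet lies in the mean convex region of the one below it, at small but not mutually facing distance). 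Handling that configuration is exactly the nontrivial content of the maximum-principle-at-infinity/regular-neighborhood results being cited, and your proposal does not supply an argument for it.

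The more serious gap is in the disconnected case, which the statement explicitly allows. You pass from ``each properly embedded component $M_i$ bounds a closed mean convex domain $G_{M_i}$'' directly to ``$M$ is the oriented boundary of a mean convex closed domain $G_M$.'' This does not follow: if some component $M_2$ were contained in the interior of the mean convex domain $G_{M_1}$ of another component $M_1$ (a configuration with bounded curvature, positive injectivity radius, and no two sheets close to one another, so untouched by your separation estimate), then no closed domain with boundary $M_1\cup M_2$ is mean convex along both components. The paper closes this hole by invoking the theorem of Ros and Rosenberg \cite{ror1} that two disjoint complete proper connected $H$-surfaces in $\rth$ cannot have one lying on the mean convex side of the other; only then do elementary separation properties show that the domains $G_{M_i}$ have pairwise disjoint interiors and that their union is the desired $G_M$. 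Your proposal needs this non-nesting input (or an equivalent argument) to reach the stated conclusion.
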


\begin{proof}
The first statement in the  corollary is an immediate
consequence of Theorem~\ref{cest}. We next explain how the second statement follows from the first one.
First note that complete connected $H$-surfaces in $\rth$ of bounded norm of the
second fundamental form are proper; see for instance~\cite{mr8} for this   result.
It follows from~\cite{mr8} that for some $\ve>0$, each component of $M$ has a
regular $\ve$-neighborhood on its mean convex side, where $\ve>0$ only depends
on the bound of the norm of the second
fundamental form of $M$.  In~\cite{ror1} Ros and Rosenberg proved that
given two complete disjoint connected
proper $H$-surfaces in $\rth$, neither one lies on the mean convex side of the other one.
It then  follows from elementary separation properties
that a complete, possibly disconnected, $H$-surface $M$ in $\rth$ of bounded norm of the
second fundamental form is   proper, it is the oriented  boundary of
a possibly disconnected, mean convex  closed  domain $G_M$ in $\rth$ and, for some $\ve>0$, $G_M$
has a 1-sided regular $\ve$-neighborhood for its
boundary $M$.
\end{proof}

For a point $p$ in a Riemannian
manifold $N$, we let $\B_N(p,R)$ denote the open ball of $N$ centered at $p$
of radius $R$ and when $N=\rth$, we let $\B(R)$ be the ball centered
at the origin of radius $R$; we let $\ov{B}_N(p,R)$ denote the
related closed balls.

Next we state a result that is closely related to Corollary~\ref{corinj} and that is
needed in the proof of Theorem~\ref{area}.
By applying the techniques used to prove
Theorem~3.5 in~\cite{mt3}, one obtains the following result.

\begin{proposition}\label{area51}
Given $R>0$, $\alpha>0$ and $\beta>0$, there exists a constant $\omega(R,\alpha,\beta)$
such that the following holds.
Suppose $M\subset \ov{\B}(R)$ is an $H$-surface
with  $\partial M\subset \partial \B(R)$,
$H\geq \alpha$ and $\sup_{M}|A_M|<\beta$ and such that $M$ bounds a mean convex domain in $\ov{\B}(R)$.
Then
\[
\textrm{\rm Area}(M\cap \B( R/2))<\omega(R,\alpha,\beta).
\]
\end{proposition}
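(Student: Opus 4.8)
The plan is to bound $\mathrm{Area}(M\cap \B(R/2))$ by the volume of an embedded one-sided tubular neighborhood of $M$ on its mean convex side. Since this tube is trapped inside a ball only slightly larger than $\B(R/2)$, its volume is at most $\mathrm{vol}(\B(R))$, and a lower bound for that volume in terms of the area of $M\cap\B(R/2)$ will then close the estimate.

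The first and essential step is to produce a uniform width $\varepsilon=\varepsilon(\alpha,\beta)>0$ for which the inward normal exponential map $\Phi\colon M\times[0,\varepsilon]\to\rth$, $\Phi(x,t)=x+t\,\nu(x)$, is injective; here $\nu$ is the unit normal pointing into the mean convex domain $W\subset\ov{\B}(R)$ bounded by $M$. This is the analogue, in the present compact-with-boundary setting, of the one-sided regular $\varepsilon$-neighborhood produced in Corollary~\ref{corinj}, and it is precisely where all three hypotheses are used together: the bound $\sup_M|A_M|<\beta$ controls focal points along each normal segment, while mean convexity together with the Ros--Rosenberg type separation for $H$-surfaces with $H\geq\alpha>0$ (as in the proof of Corollary~\ref{corinj} and in the techniques of Theorem~3.5 of~\cite{mt3}) prevents two distinct sheets of $M$ from approaching one another through $W$ below the scale $\varepsilon(\alpha,\beta)$. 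I expect this embeddedness/separation step to be the main obstacle, since the failure of injectivity of $\Phi$ is a nonlocal phenomenon that the pointwise curvature bound alone cannot rule out.

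Granting such an $\varepsilon$, the remainder is a tube computation. Writing $S$ for the shape operator of $M$, the Jacobian of $\Phi$ is $|\det(I-tS)|=|(1-t\kappa_1)(1-t\kappa_2)|$ with $|\kappa_i|\leq|A_M|<\beta$, so for $0\leq t\leq\varepsilon$ and $\varepsilon\leq 1/(2\beta)$ one gets $|\det(I-tS)|\geq(1-t\beta)^2\geq\tfrac14$. Setting $M_0=M\cap\B(R/2)$ and using injectivity of $\Phi$ on $M_0\times[0,\varepsilon]$, the coarea formula gives $\mathrm{vol}\big(\Phi(M_0\times[0,\varepsilon])\big)=\int_{M_0}\int_0^\varepsilon|\det(I-tS)|\,dt\,dA\geq\tfrac14\,\varepsilon\,\mathrm{Area}(M_0)$. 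On the other hand every point of $\Phi(M_0\times[0,\varepsilon])$ lies within distance $\varepsilon$ of $\B(R/2)$, so if moreover $\varepsilon\leq R/2$ the image is contained in $\B(R)$ and has volume at most $\mathrm{vol}(\B(R))=\tfrac43\pi R^3$.

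Combining the two inequalities and taking $\varepsilon=\min\{\varepsilon(\alpha,\beta),\,1/(2\beta),\,R/2\}$ yields $\mathrm{Area}(M\cap\B(R/2))\leq 16\pi R^3/(3\varepsilon)$, so one may set $\omega(R,\alpha,\beta)=16\pi R^3/(3\varepsilon)$. Everything after the first paragraph is the elementary tube estimate; the only genuinely nontrivial ingredient is the uniform one-sided regular neighborhood. If one wished to avoid importing the separation result wholesale, I would reprove it here by a blow-up argument: were no uniform $\varepsilon$ to exist, one could rescale a sequence of near self-tangencies and, using the curvature estimates of Theorem~\ref{cest}, extract a limit consisting of two complete $H$-surfaces in $\rth$ with one lying on the mean convex side of the other, contradicting the Ros--Rosenberg separation invoked in Corollary~\ref{corinj}.
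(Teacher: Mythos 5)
Your argument is correct and is essentially the paper's own: the paper proves Proposition~\ref{area51} only by reference to ``the techniques used to prove Theorem~3.5 in~\cite{mt3},'' which are precisely the construction of a uniform one-sided regular $\varepsilon$-neighborhood on the mean convex side (the same device invoked in Corollary~\ref{corinj} and in the proof of Claim~\ref{injec}) followed by the tube-volume comparison you carry out. You have correctly isolated the only nontrivial ingredient --- the uniform $\varepsilon(\alpha,\beta)$ via curvature bounds plus the maximum principle for $H\geq\alpha>0$ --- and the rest of your computation matches the intended proof.
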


%

We next describe the notion of
flux of an $H$-surface in $\rth$, see for instance~\cite{kks1,ku2,smyt1}
for further discussion of this invariant. 

\begin{definition} \label{def:flux} {\em
Let $\gamma$ be a piecewise-smooth 1-cycle in an $H$-surface $M$ in $\rth$. The {\em flux} of
$\gamma$ is $F(\g)=|\int_{\gamma}(H\gamma+\xi)\times \dot{\gamma}|$, where $\xi$
is the unit normal to $M$ along $\gamma$ and $\gamma$ is parameterized
by arc length. The flux only depends on the
homology class of $\g$.

In the case that $H_1(M)=\Z$, we let
$F(M)$ denote the flux of any curve which represents a generator of $H_1(M)$.}
\end{definition}
The next theorem appears  in~\cite{mt15}.

\begin{theorem}\label{cann}
Given $\rho>0$ and $\delta\in (0,1)$
there exists a positive constant $I_0(\rho,\delta)$
  such that if $E$ is
a compact 1-annulus with $F(E)\geq \rho$  or with $F(E)=0$,
then $$\inf_{\{p\in E \; \mid \; d_E(p,\partial E)\geq \delta\}} I_E\geq I_0(\rho,\delta), $$
where $I_E\colon E\to [0,\infty)$ is the injectivity radius function of $E$.
\end{theorem}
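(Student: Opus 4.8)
The plan is to argue by contradiction through a blow-up analysis carried out at the scale of the injectivity radius, and to show that any such pinching must produce a catenoidal neck, whose waist carries a fixed nonzero flux. Suppose no constant $I_0(\rho,\delta)$ exists. Then there is a sequence of compact $1$-annuli $E_n\subset\rth$, each with $F(E_n)\geq\rho$ (respectively each with $F(E_n)=0$), together with points $p_n\in E_n$ satisfying $d_{E_n}(p_n,\partial E_n)\geq\delta$ and $I_{E_n}(p_n)\to 0$. Setting $\lambda_n=1/I_{E_n}(p_n)\to\infty$, I would consider the rescaled surfaces $\hat E_n=\lambda_n(E_n-p_n)$. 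These are embedded $H_n$-surfaces with $H_n=1/\lambda_n\to 0$, with injectivity radius exactly $1$ at the origin $\hat p_n$, and with $d_{\hat E_n}(\hat p_n,\partial\hat E_n)\geq\lambda_n\delta\to\infty$, so that the boundary recedes to infinity.

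Next I would extract a limit. Because the intrinsic ball $B_{\hat E_n}(\hat p_n,r)$ is an embedded $H_n$-disk for every $r<1$, the curvature estimate of Theorem~\ref{cest}, in a form uniform as $H_n\to 0$, bounds $|A_{\hat E_n}|$ on a fixed neighborhood of $\hat p_n$; propagating this bound along the surface I would pass to a subsequence converging smoothly and with multiplicity one to a non-flat, complete, embedded minimal surface $\Sigma\subset\rth$ through the origin (completeness and properness following the circle of ideas in Corollary~\ref{corinj}). Since the injectivity radius is continuous under smooth convergence, $\Sigma$ has injectivity radius $1$ at the origin, so it is not a plane and carries a geodesic loop $\gamma_\infty$ of length $2$ representing a generator of $H_1(\Sigma)$. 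The generators $\hat\gamma_n$ of $H_1(\hat E_n)$, realized near $\hat p_n$ as the short essential loops created by the pinching, have uniformly bounded length $\ell_n$ and converge to $\gamma_\infty$ as cycles.

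The substantive point, and the main obstacle, is to identify $\Sigma$: as a blow-up limit of pinching \emph{annular} necks, $\Sigma$ is a complete embedded minimal surface that is topologically an annulus near the origin, and the uniqueness theorem for complete embedded minimal annuli forces $\Sigma$ to be a catenoid, with $\gamma_\infty$ homologous to its waist and hence carrying a fixed flux $F_0>0$. Granting this, the flux — a homological invariant, continuous under the convergence, and scaling linearly under the dilation — satisfies $F(\hat\gamma_n)\to F_0>0$, while $F(\hat\gamma_n)=\lambda_n F(E_n)$. In the case $F(E_n)=0$ the right-hand side is identically $0$, contradicting convergence to $F_0>0$; in the case $F(E_n)\geq\rho$ it is at least $\lambda_n\rho\to\infty$ (and is in any event bounded by $\ell_n(1+o(1))$, since $|\xi\times\dot{\hat\gamma}_n|=1$ and the term $H_n\hat\gamma_n$ is $O(1/\lambda_n)$ along $\hat\gamma_n$), again a contradiction. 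Either way the assumption fails, which proves the theorem. I expect the identification of $\Sigma$ as a catenoid to be the delicate step: one must secure multiplicity-one convergence to a complete limit with the correct finite, two-ended, genus-zero structure, and must verify that the small injectivity radius is forced by the \emph{essential} (generating) loop rather than a contractible one — equivalently, that pinching of embedded $H$-surfaces is always modeled on catenoidal, nonzero-flux necks.
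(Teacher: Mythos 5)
You should first be aware that the paper does not prove Theorem~\ref{cann} at all: it is quoted verbatim from~\cite{mt15}, so there is no internal proof to measure your argument against. That said, your overall strategy --- blow up at the scale of the injectivity radius, identify a catenoidal neck in the limit, and contradict the hypothesis on $F(E)$ via the linear scaling of flux under dilation --- is the natural one and is consistent with the blow-up analysis the paper itself carries out in Section~\ref{sec5}. Your endgame is also correctly set up: granted a catenoid limit with smooth convergence of the short loops, both the case $F(E_n)\geq\rho$ (flux of the rescaled loop blows up like $\lambda_n\rho$ while remaining bounded by its length) and the case $F(E_n)=0$ (flux is identically zero but must converge to the nonzero waist flux) yield contradictions, and the same dichotomy even disposes of the worry about the short loop being contractible.

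There are, however, two genuine gaps, and they are not peripheral. First, the ``form of Theorem~\ref{cest} uniform as $H_n\to 0$'' that you invoke does not exist: the constant $K(\delta,\cH)$ necessarily degenerates as $\cH\to 0$, since rescaled helicoids show that no curvature estimate holds for minimal disks. Hence the rescaled surfaces $\hat E_n$ need not have locally bounded second fundamental form, and the limit object can a priori be a minimal parking garage structure --- precisely the degenerate case the paper must confront in Section~\ref{sec5} via Theorem~1.5 of~\cite{mt14}. (Relatedly, you rescale at $p_n$ itself rather than at a point of almost-minimal injectivity radius; without maximizing a quotient of the form $d(x,\partial)/I(x)$ as in Section~\ref{sec5}, you do not control the injectivity radius of $\hat E_n$ at points near the origin, which any compactness argument requires.) Second, the identification of the limit as a catenoid, which you rightly flag as delicate, is the entire content of the theorem and cannot be delegated to ``the uniqueness theorem for complete embedded minimal annuli'': the blow-up limit of annular pieces is only a complete embedded minimal surface arising as a local limit, and genus zero alone still permits the Riemann minimal examples and the parking garage structures, both genus-zero objects that do arise as such limits. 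Ruling these out is where the hypotheses must actually be used; in the paper's closed-surface setting this is done in Proposition~\ref{limitsurface} by solving Plateau problems in the ambient mean convex region, an argument with no direct analogue for an abstract compact annulus with boundary. As written, the proposal assumes its hardest step.
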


Corollary~\ref{corinj} and
Theorem~\ref{cann} imply that annular ends of complete   $H$-surfaces in $\rth$
have representatives which are properly embedded in $\rth$, and so by the classification
results in~\cite{kks1}, we have the following.

\begin{corollary} \label{unduloids}
Annular ends of complete   $H$-surfaces in $\rth$
have representatives  that are asymptotic to the ends of unduloids.
\end{corollary}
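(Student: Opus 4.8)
The plan is to combine the intrinsic injectivity-radius estimate of Theorem~\ref{cann} with the curvature and properness machinery of Corollary~\ref{corinj}, and then to invoke the asymptotic classification of properly embedded constant-mean-curvature annular ends from~\cite{kks1}. First I would fix an annular end $E$ of the complete $H$-surface $M\subset\rth$, so that $E$ is diffeomorphic to $\esf^1\times[0,\infty)$, and record that its flux $F(E)$, in the sense of Definition~\ref{def:flux}, is a well-defined nonnegative homological invariant of the end; hence either $F(E)=0$ or $F(E)=\rho$ for some fixed $\rho>0$.

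Next I would produce a properly embedded representative of $E$. Exhaust $E$ by compact sub-annuli $E_k$ and apply Theorem~\ref{cann} with the fixed value $\rho$ (or with the hypothesis $F(E)=0$) and some fixed $\delta\in(0,1)$; since every $E_k$ carries the same flux as $E$, this yields a uniform lower bound $I_0=I_0(\rho,\delta)$ for the injectivity radius function of $E_k$ at points of intrinsic distance at least $\delta$ from $\partial E_k$. Letting $k\to\infty$, the sub-annuli sweep out a sub-end $E'\subset E$ on which the injectivity radius is bounded below by $I_0$. By the curvature estimate underlying Corollary~\ref{corinj} (that is, Theorem~\ref{cest}), $E'$ then has uniformly bounded second fundamental form, and the properness argument of Corollary~\ref{corinj}, based on~\cite{mr8} together with the separation result of Ros and Rosenberg~\cite{ror1}, shows that $E'$ has a representative that is properly embedded in $\rth$.

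Finally I would apply the structure theory of~\cite{kks1}: a properly embedded annular end of constant mean curvature $H>0$ in $\rth$ is cylindrically bounded and asymptotic to the end of a Delaunay surface, the embedded members of which are precisely the unduloids (with the round cylinder as the limiting case). The asymptotic unduloid is the one whose flux matches $F(E)$, which fixes its neck size. This produces the desired representative asymptotic to the end of an unduloid.

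I expect the main obstacle to be the passage from Theorem~\ref{cann}, which is stated for \emph{compact} $1$-annuli, to a genuine properly embedded representative of the whole end. The delicate points are to confirm that the flux used in Theorem~\ref{cann} is exactly the fixed homological invariant $F(E)$ carried by every sub-annulus, so that the lower bound $I_0$ is independent of $k$, and to ensure that this uniform intrinsic injectivity-radius bound persists in the exhaustion limit and feeds correctly into the extrinsic curvature and properness estimates. Once a properly embedded sub-end is in hand, the asymptotic statement is a direct citation of~\cite{kks1}.
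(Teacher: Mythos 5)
Your proposal is correct and follows essentially the same route as the paper: the paper's (one-sentence) proof likewise combines Theorem~\ref{cann} (applied according to whether the flux of the end vanishes or equals some fixed $\rho>0$) with Corollary~\ref{corinj} to obtain a properly embedded representative of the annular end, and then cites the classification in~\cite{kks1} to conclude asymptoticity to an unduloid end. Your write-up merely makes explicit the exhaustion by compact sub-annuli and the homological invariance of the flux, which is exactly the intended mechanism.
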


\section{Properties of $H$-surfaces in a flat 3-manifold}

In this section we prove some geometric properties of complete $H$-surfaces in a flat 3-manifold.

\begin{theorem} \label{separate} Let $N$ be a complete connected  flat 3-manifold with
universal cover $\Pi\colon \rth \to N$ and let $M$ be a complete  $H$-surface
 in $N$. Then the following holds:
\ben
\item  \label{ait1} If $M$ has positive injectivity
radius,
then it has bounded norm of the second fundamental form, is properly
embedded in $N$ and it is the oriented boundary of a smooth, possibly disconnected,
complete closed subdomain $G_M$ on its mean convex side. The mean convex domain $G_M$
has radius
at most $1/H$, and $M=\partial G_M$ has a 1-sided regular $\ve$-neighborhood in $G_M$ for some $\ve>0$.

\item  \label{ait2} If $M$ has finite topology, then it has positive injectivity
radius. Furthermore, each annular end $E$ of $M$ lifts to an annulus
$\wt{E}\subset \rth$, where $\wt{E}$ is asymptotic to the end of an unduloid.
\een
\end{theorem}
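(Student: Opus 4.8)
The unifying idea is to pull everything back to the universal cover $\Pi\colon\rth\to N$, where the Euclidean results of Section~\ref{sec:pre} are available, and then to push the conclusions back down using that $\Pi$ is a local isometry and that a Riemannian covering never decreases the intrinsic injectivity radius: conjugate distances are preserved by a local isometry, while geodesic loops can only lengthen on passing to a cover, so $\inf_{\wt X} I_{\wt X}\geq\inf_X I_X$ for any Riemannian covering $\wt X\to X$.

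For part~\ref{ait1}, set $\wt M=\Pi^{-1}(M)$, which is a complete embedded $H$-surface in $\rth$; since $\wt M\to M$ is a Riemannian covering we get $\inf_{\wt M}I_{\wt M}\geq\inf_M I_M>0$. Corollary~\ref{corinj} then bounds $\sup_{\wt M}|A_{\wt M}|$, which equals $\sup_M|A_M|$ because $\Pi$ is a local isometry, and exhibits $\wt M$ as a properly embedded boundary $\wt M=\partial G_{\wt M}$ of a mean convex domain carrying a one-sided regular $\ve$-neighborhood. As $\wt M$ is $\Gamma$-invariant (with $N=\rth/\Gamma$) and the mean convex side is determined by the mean curvature vector, $G_{\wt M}$ is $\Gamma$-invariant and descends to the desired $G_M\subset N$ with all the stated properties. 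For the radius bound I would work upstairs: the intrinsic distance from an interior point of $G_{\wt M}$ to $\partial G_{\wt M}$ equals the Euclidean distance to the nearest boundary point $x$ (the straight segment to $x$ meets no boundary point, hence lies in $\overline{G_{\wt M}}$), and along this normal geodesic $\wt M$ has a focal point at distance at most $1/H$, since the larger principal curvature at $x$ is at least $H$; equivalently one may compare with a round sphere of radius $1/H$. A minimizing segment cannot pass a focal point, so every point lies within $1/H$ of $\wt M$, and this bound passes down to $G_M$ because $\Pi$ is a local isometry.

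For part~\ref{ait2} the plan is to first prove that finite topology forces positive injectivity radius; part~\ref{ait1} then makes $M$ proper with bounded curvature, and Corollary~\ref{unduloids} identifies the end geometry. Since $I_M$ is continuous and positive it is bounded below on the compact core of $M$, so the issue is to bound $I_M$ from below along each of the finitely many annular ends $E$. The engine is Theorem~\ref{cann}: once such an end lifts to an annulus $\wt E\subset\rth$, every large compact subannulus is a compact $1$-annulus whose flux is the single homological value $F(\wt E)$, which is either $0$ or some fixed $\rho>0$; in both admissible cases Theorem~\ref{cann} gives a uniform lower bound for the injectivity radius away from the boundary of the subannulus. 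Since $\wt E\to E$ is then an isometry and the shortest geodesic loops based deep in $E$ stay in $E$, this bounds $I_M$ from below along the end, hence globally. With positive injectivity radius in hand, part~\ref{ait1} applies and Corollary~\ref{unduloids} shows each $\wt E$ is asymptotic to an unduloid end.

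The crux — and the step I expect to be the main obstacle — is exactly the claim that every annular end lifts to an annulus, i.e.\ that the core circle $c$ is null-homotopic in $N$. If instead $[c]$ is nontrivial it has infinite order (the Bieberbach group $\Gamma$ is torsion-free), so for the corresponding isometry $T$ one has $d_{\rth}(\wt p,T^k\wt p)\geq d_{\rth}(\wt p,T\wt p)>0$ for all $k\neq 0$; thus every geodesic loop freely homotopic to a nonzero power of $c$ is long, and a collapse of the injectivity radius along such an end could only be produced by null-homotopic short geodesic loops, which by Gauss--Bonnet concentrate curvature. Excluding this, and excluding the nontrivial wrapping itself, is where I would combine the radius bound $1/H$ and the properness and curvature bound from part~\ref{ait1} with the unduloid model of Corollary~\ref{unduloids}: a nontrivially wrapped, $T$-periodic end cannot be asymptotic to an unduloid end while keeping its mean convex side within distance $1/H$ of the surface. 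Making this incompatibility quantitative is the technical heart of the argument.
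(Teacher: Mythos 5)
Your treatment of item~\ref{ait1} is sound and essentially the paper's: pull $M$ back to $\Pi^{-1}(M)$, apply Corollary~\ref{corinj}, descend the mean convex domain by equivariance, and prove the radius bound upstairs. Your focal-point version of that bound is a legitimate variant of the paper's argument, which compares $\partial G_{\Pi^{-1}(M)}$ with the sphere $\partial \B(\wt{p},R_0)$ via the mean curvature comparison principle.

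The gap is exactly where you predicted it, in the lifting claim of item~\ref{ait2}; the problem is that the paper closes it with two short arguments you do not supply, and the route you sketch instead cannot work as stated. First, if the core circle of $E$ had nontrivial (hence infinite-order) image in $\pi_1(N)$, each component of $\Pi^{-1}(E)$ would be the universal cover of $E$: a complete simply connected $H$-surface with boundary and infinite radius, which therefore contains $H$-disks of arbitrarily large radius. This contradicts the Radius Estimates (Theorem~\ref{rest}), which bound the radius of every $H$-disk in $\rth$ by ${\cal R}/H$; no properness, curvature bound, or unduloid model is needed. Second, once each component $A$ of $\Pi^{-1}(E)$ is known to be an annulus, $\partial A$ is compact (a circle finitely covering $\partial E$); if $\Pi|_A$ were not injective, the covering transformation $\sigma$ carrying one preimage to another would preserve $A$, hence preserve the compact set $\partial A$, hence fix a point of $\rth$ --- impossible for a nontrivial deck transformation. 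Your proposed alternative --- invoking the unduloid asymptotics of Corollary~\ref{unduloids} and the $1/H$ radius bound of item~\ref{ait1} to exclude a nontrivially wrapped end --- is circular: Corollary~\ref{unduloids} applies to annular ends already realized in $\rth$ (its proof runs through Theorem~\ref{cann} and Corollary~\ref{corinj} applied to a lifted annulus), and item~\ref{ait1} presupposes the positive injectivity radius you are in the middle of establishing. The displacement inequality $d_{\rth}(\wt{p},T^k\wt{p})\geq d_{\rth}(\wt{p},T\wt{p})$ is also unjustified for a general screw motion in a Bieberbach group, though this becomes moot once the argument is replaced by the one above. With the lifting in hand, your use of Theorem~\ref{cann} and Corollary~\ref{unduloids} to conclude positive injectivity radius and unduloid asymptotics agrees with the paper.
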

\begin{proof}
We first prove item~\ref{ait1} of the theorem.
Suppose that $M$ has positive injectivity radius. Consider the possibly disconnected
surface  $\Sigma=\Pi^{-1}(M)\subset \rth$,
which also has positive
injectivity radius. Applying Corollary~\ref{corinj} to $\Sigma$, we conclude that
$\Sigma$ is properly embedded in $\rth$ with bounded norm of the second fundamental form
and $\Sigma$ is the boundary of a mean convex closed domain $G_\Sigma$ that has  a
regular $\ve$-neighborhood for its boundary surface $\Sigma$ for some $\ve>0$.
By  elementary theory of covering spaces, it follows that the domain $G_M=\Pi(G_\Sigma)$
satisfies all of the properties in item~1 of the theorem except possibly for the   property
that the radius of $G_{M}$ is at most $1/H$. Arguing by
contradiction suppose that there exists a point $p\in G_{M}$
with $d_N(p,\partial G_{M})=R_0>1/H$, where $d_N$ is the distance function in $N$.
In this case consider the
associated mean convex region $G_{\Sigma}=\Pi^{-1}(G_{M})\subset \rth$.
For any choice $\wt{p}\in \Pi^{-1}(p)\cap G_{\Sigma}$,
$d_{\rth}(\wt{p},\partial G_{\Sigma})=R_0>1/H$, where $d_{\rth}$ is the Euclidean distance.
Note that $\partial \B(\wt{p},R_0)$
intersects $\partial G_{\Sigma}$ at some point $q$ and  $\partial \B(\wt{p},R_0)\subset G_\Sigma$.
Since the mean curvature of $\partial G_{\Sigma}$
is $H$ and the mean curvature of $\partial \B(\wt{p},R_0)$
is $1/R_0 <H$, we obtain a contradiction to the mean curvature
comparison principle applied to the surfaces $\partial \B(\wt{p},R_0)$
and $\partial G_{\Sigma}$ at the point $q$.
This  completes the
proof  of item~\ref{ait1} of Theorem~\ref{area}.

We next prove   item~\ref{ait2}.
Let $E$ be an annular end representative of $M$.
We claim that the
inclusion map $i\colon E\to M\subset N$ lifts through
$\Pi\colon\rth\to N$ to $\wt{i} \colon E\to \rth$,
from which  item~\ref{ait2} follows by applying Corollary~\ref{unduloids}
to $\wt{i}(E)$.
 Clearly each component of $\Pi^{-1}(E)\subset \rth$ must be an annulus $A$ with
 boundary $\Pi(\partial A)= \partial E$.
Otherwise, elementary covering space theory implies that each component of
$\Pi^{-1}(E)\subset \rth$ is a complete  simply connected
$H$-surface with boundary and having infinite radius. Such a component
contains $H$-disks of arbitrarily large radius,
thereby contradicting Theorem~\ref{rest}. Suppose that the
inclusion map $i\colon E\to M\subset N$ does not lift through
$\Pi\colon\rth\to N$ to a continuous map $\wt{i} \colon E\to \rth$. In this case there
exist $p,q\in A$, $p\neq q$ such that $\Pi(p)=\Pi(q)\in i(E)$.  Let
$\sigma\colon\rth\to\rth$ be the covering transformation such that $\sigma (p)=q$.
The map $\sigma$ is an isometry of $\rth$ that leaves invariant the compact set $\partial A$.
This implies that $\sigma$ has a fixed point and therefore cannot be a
nontrivial covering transformation.
This contradiction completes the proof of item~\ref{ait2},
thus finishing the proof of the theorem.
\end{proof}

\section{Area estimates for $H$-surfaces in a flat 3-torus}
In the next six sections, $N$ will denote a flat 3-torus. We begin by proving a weaker
area estimate than the one in Theorem~\ref{area},
namely an area estimate for $M$ that depends on the flat 3-torus but
without specifying what geometric quantities of the 3-torus are important in such
estimates. In Section~\ref{sec:10} we outline how
Theorem~\ref{area} will follow from Theorem~\ref{area2} below.
Notice that the $H$-surface $M$ in the next theorem is assumed
to be {\em connected}, whereas the surface in
Theorem~\ref{area} may be disconnected. Indeed, the next theorem actually holds in the more general
situation where $M$
is allowed to be disconnected. This follows from the fact that the number of components of
$M$ can be bounded in terms of its genus and the geometry of $N$; see Section~\ref{sec:8.1}.

\begin{theorem} \label{area2} Let $N$ be a flat 3-torus. Given $a,b\in (0,\infty)$,
with $a\leq b$, and $g\in\mathbb N\cup \{0\}$, there exists a constant
$A(N, a,b, g)$ such that if  $M$ is a closed connected $H$-surface in $N$ of genus $g$
with $H\in [a,b]$, then
     $$\mathrm{Area}(M)\leq A(N,a,b,g).$$
\end{theorem}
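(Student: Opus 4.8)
The plan is to argue by contradiction using a compactness/blow-up analysis. Suppose Theorem~\ref{area2} fails for some fixed $N$, $a$, $b$, and $g$. Then there is a sequence $\{M_n\}_{n\in\N}$ of closed connected $H_n$-surfaces in $N$ of genus $g$ with $H_n\in[a,b]$ and $\mathrm{Area}(M_n)\to\infty$. The first step is to control the injectivity radii. If the injectivity radii of the $M_n$ were uniformly bounded below by some $r_0>0$, then by Theorem~\ref{separate}(\ref{ait1}) each $M_n$ would have norm of the second fundamental form uniformly bounded by $K(r_0,a)$ (via the curvature estimate in Theorem~\ref{cest}, pulled back to the universal cover $\rth$). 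A closed surface of genus $g$ in a flat 3-torus with a uniform curvature bound and a uniform mean curvature lower bound cannot have arbitrarily large area: one can cover $N$ by finitely many balls and use the local area bound of Proposition~\ref{area51} (applied on the scale of the injectivity radius of $N$, lifting to $\rth$, where $M_n$ bounds the mean convex domain $G_{M_n}$) to bound $\mathrm{Area}(M_n)$ uniformly, a contradiction. Hence, after passing to a subsequence, $\inf_{M_n} I_{M_n}\to 0$.

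The heart of the argument is then the local analysis near points $p_n\in M_n$ where the injectivity radius becomes small, the ``singular points.'' I would rescale: set $\lambda_n=1/I_{M_n}(p_n)\to\infty$ and consider the pointed surfaces $\lambda_n(M_n-\wt{p}_n)$ in $\rth$ (working in the universal cover via $\Pi$, centering at a lift $\wt p_n$ of $p_n$). These rescaled surfaces have mean curvature $H_n/\lambda_n\to 0$ and, by construction, injectivity radius $1$ at the base point; the curvature estimates give uniform local bounds on the second fundamental form away from a controlled set, so a subsequence converges to a complete minimal surface in $\rth$ passing through the origin, with injectivity radius exactly $1$ there. The flux machinery (Definition~\ref{def:flux}, Theorem~\ref{cann}, Corollary~\ref{unduloids}) is what pins down the local model: a homotopically nontrivial short curve through $p_n$ must carry small flux, forcing the limit to have a catenoidal neck, so near each singular point $M_n$ looks like a scaled catenoid. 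This is the step that will govern how singular points contribute to area and genus.

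The main obstacle, and the step I expect to be hardest, is proving that the number of singular points stays bounded (indeed finite) and extracting a contradiction from the genus constraint. Each catenoidal neck that forms near a singular point either consumes a unit of genus or connects two sheets, and the genus is fixed at $g$; so only finitely many independent necks can form, which should bound the number of singular points after passing to a subsequence. Once the singular set $\Delta$ is finite, away from it the surfaces $M_n$ converge smoothly (with multiplicity) to a limit $H_\infty$-lamination, and near each point of $\Delta$ the catenoid model contributes only a uniformly bounded amount of area (area density $2$, as noted after Theorem~\ref{seq-compact}). Combining the bounded contribution near $\Delta$ with the bounded contribution away from $\Delta$ (again via Proposition~\ref{area51} on the regular part) yields a uniform area bound, contradicting $\mathrm{Area}(M_n)\to\infty$. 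The delicate points are ensuring the neck count is genuinely controlled by the fixed genus rather than escaping, and ruling out accumulation of singular points, both of which rely on the separation properties of $H$-surfaces and the impossibility of two disjoint complete $H$-surfaces each lying on the mean convex side of the other, as used in Corollary~\ref{corinj}.
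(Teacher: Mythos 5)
Your overall strategy --- blow-up at points of small injectivity radius, a finite singular set, and separate area bounds near and away from the singular points --- matches the paper's. But there is a genuine gap at the step you yourself flag as hardest: bounding the number of singular points. Your mechanism, that each catenoidal neck ``either consumes a unit of genus or connects two sheets'' so that the fixed genus bounds the neck count, does not work: catenoidal necks need not consume genus. A connected surface of genus one can carry arbitrarily many necks (picture a closed necklace of spheres joined by thin tubes), so the genus $g$ by itself cannot bound the number of catenoid singular points. The paper's argument is essentially different here: the flux of a singular loop is nonzero and flux is a homological invariant, so each singular loop is homotopically nontrivial in $M_n$; a combinatorial lemma (Lemma~\ref{genus}) shows that any collection of more than $3g-2$ disjoint homotopically nontrivial simple closed curves on a genus-$g$ surface contains a pair cobounding an annulus in $M_n$; and an Alexandrov reflection argument applied to such an annulus, lifted to $\rth$, traps a region of volume at least a fixed $\ve_1>0$ inside the mean convex side $G_{M_n}$, with distinct annuli giving disjoint regions. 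The number of singular points is therefore at most $\frac{\mathrm{Volume}(N)}{\ve_1}(3g-2)+g$: the genus enters only combinatorially, and the actual finiteness comes from the volume of $N$. Without this volume argument your proof does not close.

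Two smaller gaps. First, you assert via ``flux machinery'' that the blow-up limit is a catenoid; a priori the limit could also be a Riemann minimal example, a minimal parking garage structure, or a positive-genus properly embedded minimal surface. Excluding the first two is the content of Proposition~\ref{limitsurface} and requires solving least-area Plateau problems in $G_{M_n}$ to show that the many resulting boundary curves are homotopically nontrivial and pairwise non-annular in $M_n$, which then contradicts Lemma~\ref{genus}; the positive-genus limits are not excluded at this stage, only bounded in number by $g$ via additivity of the genus. Second, the remark that ``the catenoid model contributes area density $2$'' controls the surface only at the blow-up scale; to bound $\mathrm{Area}(M_n\cap B_N(q_i,\ve))$ at the fixed scale $\ve$ the paper uses the monotonicity formula to dominate this quantity by the area of $M_n$ in the annular region $B_N(q_i,2\ve)-B_N(q_i,\ve)$, which lies away from $\Delta$ and is therefore controlled by the smooth multiplicity-one convergence there.
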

\begin{proof}
Arguing by contradiction, suppose $M_n$ is a sequence
of $H_n$-surfaces, $H_n\in [a,b]$,  in $N$ of genus $g$ such that
$$\mathrm{Area}(M_n)>n .$$
The proof  of Theorem~\ref{area2}  is divided into steps with the final contradiction
appearing at the end of Section~\ref{sec:final-cont}.

Note that by item~\ref{ait1} of Theorem~\ref{separate}, $M_n$ separates $N$ into two
regions and one of them, denoted by $G_{M_n}$, is mean convex.

\begin{claim}\label{injec}
The injectivity radii $I(M_n)$ converge to zero as $n$ goes to infinity.
\end{claim}

\begin{proof}
Arguing by contradiction, suppose there exists $\de>0$ such that after replacing by
a subsequence, $I(M_n)>\de$ for any $n$. Then by
Theorem~\ref{cest}, the set of functions $\{|A_{M_n}|\}_n$ is bounded from above by a
fixed constant independent of $n$.
Since the surfaces $M_n$ are $H_n$-surfaces with $H_n\geq a>0$, then Theorem~3.5 in~\cite{mt3}
implies that
there exist constants $\ve, A_0 >0$ such that the surfaces
have a 1-sided regular $\ve$-neighborhood $\mathcal{N}(n,\ve)\subset G_{M_n}$  and the area
of each $M_n$ is at most $A_0\cdot \mathrm{Volume}(\mathcal{N}(n,\ve))$.   Therefore,
$$\mathrm{Area}(M_n)\leq A_0\, \mathrm{Volume}(\mathcal{N}(n,\ve)) \leq  A_0\,  \mathrm{Volume}(N),$$
which contradicts that the areas of the surfaces $M_n$ are becoming arbitrarily large.
\end{proof}

In light of Claim~\ref{injec} we introduce the following definitions.

   \begin{definition} \label{def:lbsf} {\rm Let $U$ be an open set in $N$. \ben \item
We say that a sequence of surfaces $T_n\subset U$  has
{\em locally bounded norm of the second fundamental form in $U$} if for each
compact ball $B$ in $U$, the norms of the second fundamental forms of the surfaces $T_n\cap B$
are uniformly bounded. \item We say that a sequence of surfaces $T_n\subset U$  has
{\em locally positive injectivity radius in $U$} if for each compact ball
$B$ in $U$ the injectivity radius functions  of the surfaces $T_n$ are bounded
away from 0 in $T_n\cap U$. \een } \end{definition}

 Suppose that the injectivity radius functions $I_n$ of $M_n$ have their minimum values
at points $p_{1,n}\in M_n$.   By Claim~\ref{injec} we can assume
that $I_n(p_{1,n})<1/n$.  After choosing a subsequence and reindexing, we obtain a sequence $M_{1,n}$
such that the points $p_{1,n}\in M_{1,n}$ converge to a point $q_1\in N$.  Suppose the sequence of
surfaces $M_{1,n}$ fails to have locally bounded injectivity radius in $N-\{q_1\}$.
Let $q_2\in N-\{q_1\}$ be a point that is furthest away from $q_1$ and such that,
after passing to a subsequence $M_{2,n}$,  there exists a sequence of points $p_{2,n}\in M_{2,n}$
converging to $q_2$ with $\lim_{n\to\infty}I_n(p_{2,n})=0$. If the sequence of
surfaces $M_{2,n}$ fails to have locally bounded injectivity radius in $N-\{q_1,q_2\}$, then
let $q_3\in N-\{q_1\}\cup\{q_2\}$ be a point in $N$ that is furthest away from $\{q_1, q_2\}$
and such that, after passing to a subsequence,  there exists a sequence of
points $p_{3,n}\in M_{3,n}$ converging to $q_3$ with $\lim_{n\to\infty}I_n(p_{3,n})=0$.

Continuing inductively in this manner and using a diagonal-type argument, we obtain
after reindexing,
a new subsequence $M_n$ (denoted in the same way) and
a countable (possibly finite) non-empty set $\Delta':=\{q_1,q_2, q_3,\dots \}\subset \rrr{\bf  N} $
such that for every $k\in \N$, there is an integer $N(k)$ such that for $n\geq N(k)$, there
exist points $p(n,q_k)\in M_n\cap B_N(q_k,1/n)$ where $I_{M_n}(p(n,q_k))<1/n$.
We let $\Delta$ denote the closure of $\Delta'$ in $N$. It follows
from the construction of $\Delta$ that
the sequence $M_n$ has locally positive injectivity radius in $N-\Delta$.

We call the set $\Delta$ the {\em singular set of convergence} and $q\in\Delta$
a {\em singular point}. Note that, by using Theorem~\ref{cest},
$M_n$ has   locally positive injectivity radius in $N-\Delta$   if and only if $M_n$
has   locally bounded norm of the second fundamental form in $N-\Delta$.
In later sections we will replace the sequence $M_n$ by some subsequence; a key property that
follows from our construction of $M_n$ is that $\Delta$ continues to be the singular set of
convergence for this new
subsequence.

\section{The local geometry around singular points} \label{sec5}

 In this section we study the geometry of $M_n$ nearby points in $\Delta$. Let $q\in\Delta$.
 Then by definition and after possibly  replacing the surfaces $M_n$
by a subsequence, there exists a sequence of points $p_n\in M_n$
such that  $d_{N}(p_{n},q)< 1/n$, where $d_{N}$ is the distance function
in $N$, and $$\frac{1}{nI_n(p_{n})}>n.$$

Consider the continuous functions $h_n\colon M_n \cap \ov{B}_{{N}}(p_{n},1/n)\to \R $ given by
\[
h_n(x)=\frac{d_{N}(x,\partial B_{N}(p_{n},1/n))}{I_{M_n}(x)}.
\]
As $h_n$ vanishes on $M_n\cap \partial B_{N}(p_{n},1/n)$ then there
exists a point $p'_{n}\in M_n\cap B_{{N}}(p_{n},1/n)$ that is a point where $h_n$
takes on its maximum value. This point is said to be a point of {\em almost-minimal injectivity radius}.

Let $\sigma_n:= d_{N}(p'_{n}, \partial B_{N}(p_{n},1/n))$. Then
\begin{equation}\label{inj1}
\frac{\sigma_n}{I_{M_n}(p_n')}=h_n(p_n')\geq h_n(p_n)=\frac{1}{n I_{M_n}(p_n)}>n.
\end{equation}
Let $M'_n$ be the compact surface $M_n\cap \ov{B}_{N}(p'_{n},\sigma_n/2)$.
Note that $\sigma_n$ goes to zero as $n$ goes to infinity and since $M_n$ is compact
with constant mean curvature $H\leq b$, then  for $n$ sufficiently large, $M'_n$
is a compact surface with non-empty boundary that is contained in $\partial B_{N}(p'_{n},\sigma_n/2)$.
Moreover, given $q\in M'_n$ then
\[
\frac{\sigma_n/2}{I_{M_n}(q)}\leq h_n(q)\leq h_n(p_n')=\frac{\sigma_n}{I_{M_n}(p_n')},
\]
which implies that
\begin{equation}\label{inj2}
I_{M_n}(q)\geq \frac{I_{M_n}(p_n')}{2}.
\end{equation}

 Let $I_0$ denote the injectivity radius of $N$. Thinking about the balls in $N$ with
 exponential coordinates, we will consider
 $\ov{B}_N(p'_{n},r)$, $r\leq I_0$, to be the closed ball $\ov{\B}(r)=B_{\rth}(\vec{0},r)\subset \rth$
 of radius $r$ centered at the origin.
After defining $\lambda_n=\frac{1}{I_{M_n}(p'_{n})}$,
 let
 \[
 \wt{M}_n:= \lambda_n M'_n\subset \ov{\B}(\l_n\frac{\sigma_n}{2})
 \subset \rth, \; \partial \wt{M}_n\subset \partial \B(\l_n\frac{\sigma_n}{2}).
 \]
Note that $I_{\wt M_n}(\vec 0)=1$. Since $H_n\leq b$, the constant mean
curvature of $\wt M_n$ goes to zero as
$n$ goes to infinity. By equation~\eqref{inj1}, it follows that $\l_n\frac{\sigma_n}{2}$
goes to infinity as $n$ goes to infinity. By equation~\eqref{inj2}, the sequence of surfaces
$\wt M_n$ has locally positive injectivity radius in $\rth$; indeed given $B$ a compact set
of $\rth$, it follows that for $n$ sufficiently large, for any $q\in \wt{M}_n\cap B$ we
have $I_{\wt{M}_n} (q)\geq 1/2$. However,  since the constant mean curvature of $\wt M_n$
is going to zero as $n$ goes to infinity, it is no longer true for $\wt M_n$ that having
locally positive injectivity radius in $\rth$ is equivalent to having locally bounded norm
of the second fundamental form in $\rth$.

There are two cases to consider. Either  $\wt M_n$ has locally bounded norm of the second
fundamental form in $\rth$ or not. If $\wt M_n$ does NOT have locally bounded norm of the
second fundamental form in $\rth$ then
by Theorem 1.5 in~\cite{mt14}, the surfaces  $\wt M_n$ converge on compact subsets
of $\rth$ to a minimal parking garage
structure of $\rth$ with two oppositely oriented columns, see Figure~\ref{parking}
and see for instance~\cite{mpr14} for a detailed description of this limit object.

\begin{figure}[h]
\begin{center}
\includegraphics[width=5.5in]{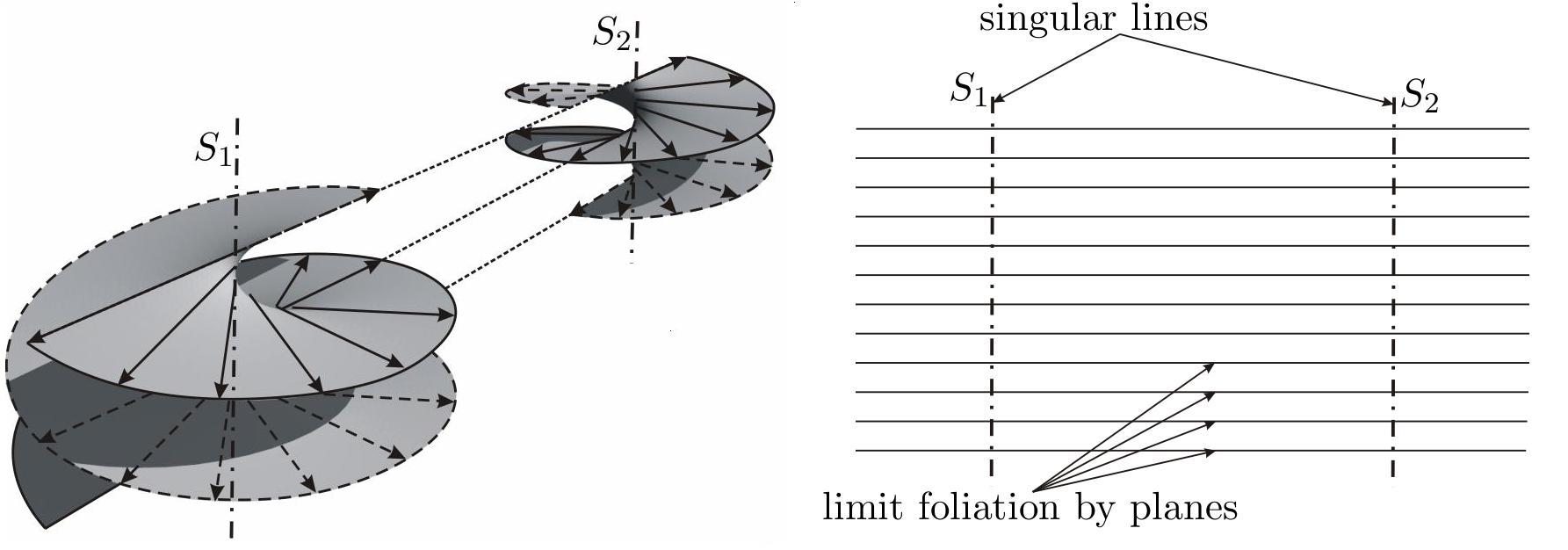}
\caption{Parking garage structure: in this picture, the sequence of surfaces on the
left hand side converge smoothly away from the union $S_1\cup S_2$ or two straight lines orthogonal
to the foliation of horizontal planes described on the right hand side.}
\label{parking}
\end{center}
\end{figure}

If $\wt M_n$ has locally bounded norm of the second fundamental form in $\rth$ then
by applying Theorem 1.3 in~\cite{mt14} and after replacing by a subsequence,
the surfaces $\wt M_n$ converge with multiplicity
one or two on compact
subsets of $\rth$ to a properly embedded minimal surface $\wt M_\infty$, in the sense that any sufficiently
small normal neighborhood of any smooth compact domain $\Omega$ of the limit surface must intersect
$\wt{M}_n$ in one or two components that are small normal graphs over $\Omega$ for $n$ large.
Moreover $\wt M_\infty$ has bounded norm of the second fundamental form, genus at most $g$ and
its injectivity radius at the origin is one. The surface $\wt M_\infty$ either has genus zero
or positive genus. If $\wt M_\infty$ has genus zero and one end, then $\wt M_\infty$ would be a plane or a
helicoid, see~\cite{mr8} and also~\cite{bb1}. This cannot happen because the  injectivity
radius of  $\wt M_\infty$  at the origin is one.
Therefore if $\wt M_\infty$ has genus zero, then it must have
more than one end and there are two cases: $\wt M_\infty$ is either a
catenoid (finite topology~\cite{lor1}), or $\wt M_\infty$ is a Riemann minimal
example (infinite topology~\cite{mpr6}); see Figure~\ref{riemann} and see for
instance~\cite{mpr6} for a detailed description of a Riemann minimal example.
 \begin{figure}[h]
\begin{center}
\includegraphics[width=2.1in]{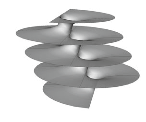}
\caption{Riemann minimal example.}
\label{riemann}
\end{center}
\end{figure}

In summary, in the limit we obtain one of the examples listed below:
\ben
\item a catenoid or a Riemann minimal example;
 \item  a minimal parking garage structure  with two oppositely oriented columns;
\item a properly embedded minimal surface with positive genus at most $g$.
\een

\begin{proposition}\label{limitsurface}
The surfaces $\wt{M}_n$ converge with multiplicity one or two on
compact subsets of $\rth$ to a catenoid or to a
properly embedded minimal surface with  positive genus at most $g$.
\end{proposition}

\begin{proof}
In light of the previous discussion, we need to rule out the occurrence
of a limit surface that is a Riemann
minimal example or a minimal parking garage structure of $\rth$ with two oppositely
oriented columns. We next rule out the case that a Riemann minimal example occurs.
An analogous discussion rules out the possibility that a minimal parking garage structure
with two oppositely oriented columns occurs; see Remark~\ref{remark:park}.

Suppose that the limit object is  a Riemann
minimal example which we denote by $\cR$.
The surface $\cR$ is a properly
embedded minimal planar domain in $\rth$ of infinite topology which is foliated by circles and
lines in a family of parallel planes. Given $R>0$ let $\Omega(R):= \B(R)- \cR$.
Then, given $m\in \mathbb N$, there exists $R>0$ such that $\B(R)\cap \cR$
has at least $2m$ boundary components in $\partial \B(R)$ such that the following holds.
At least $m$ of these boundary components do not bound a disk whose interior is contained
in $\Omega(R)$ and each pair of curves in this collection of boundary components does not
bound an annulus whose interior is contained in $\Omega(R)$, see Figure~\ref{riemcurves}.
 \begin{figure}[h]
\begin{center}
\includegraphics[width=2in]{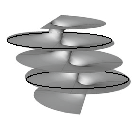}
\caption{These curves are homotopically non-trivial and do not bound an annulus.}
\label{riemcurves}
\end{center}
\end{figure}
Thus, by the nature of the convergence, a geometric picture that is similar to the
one just  described for $\cR$ is true for $\wt M_n$. Namely, let $\wt M_n(R)$ denote
the connected component of $\wt M_n\cap \B(R)$ that contains the origin.
Given $m\in\mathbb N$ there exists $R>0$ such that for $n$ sufficiently
large $\partial \wt M_n(R)$ contains at least $m$ simple closed curves

\begin{equation}\label{curves}
\{\Gamma_1(n),\dots,\Gamma_m(n)\}
\end{equation}
and the following holds.
\begin{itemize}
\item Each $\Gamma_i(n)$, $i:=1,\dots m$ does not bound a disk whose interior
is contained in $\B(R)- \wt M_n(R)$.
\item Each pair of curves $\{\Gamma_i(n),\Gamma_j(n)\}$, $i\neq j$,  does not
bound an annulus whose interior is contained in $\B(R)- \wt M_n(R)$.
\end{itemize}
\begin{remark} \label{remark:park} {\em The same description holds when the
case of  a minimal parking garage structure
with two oppositely oriented columns occurs.
In this case, the simple closed curves $\Gamma_i(n)$, $i:=1,\dots m$, wind once around
the pair of columns $S_1\cup S_2$  in
Figure~\ref{parking}. }\end{remark}

Let $\{\gamma_1(n),\dots,\gamma_m(n)\}$ be the collection of curves in $M_n$
corresponding to the curves $\{\Gamma_1(n),\dots,\Gamma_m(n)\}$ in $\partial \wt M_n(R)$.

\begin{claim}\label{nontriv}
For $n$ sufficiently large, the curve $\gamma_i(n)$, $i=1,\dots m$, is NOT homotopically  trivial in $M_n$
and each pair of curves $\{\gamma_i(n),\gamma_j(n)\}$, $i\neq j$, does NOT bound an annulus in $M_n$.
\end{claim}

\begin{proof}
Recall that the origin in $\rth$ corresponds to points $p'_n\in M_n$ and let $B(\ve)$
denote the balls of radius $\ve$ in $N$ centered at $p'_n$. Recall that
by item~\ref{ait1} of Theorem~\ref{separate}, $M_n$ separates $N$ into two closed regions
and one of them, denoted by $G_{M_n}$ is mean convex.
 By the previous discussion there exists $\ve_n>0$ such
 that $\gamma_i(n)\subset \partial B(\ve_n)$, $\gamma_i(n)$ is homotopically
non-trivial in $\ov{B}(\ve_n)\cap G_{M_n}$ and each pair of curves $\{\gamma_i(n),\gamma_j(n)\}$, $i\neq j$,
 does not bound an annulus in $\ov{B}(\ve_n)\cap G_{M_n}$.  Moreover, $\lim_{n\to\infty}\ve_n=0$.

Suppose that $\gamma_i(n)$ is homotopically trivial in  $M_n$, namely that $\gamma_i(n)$
bounds a disk in $M_n$. Then, by the results in~\cite{my2}, solving the annular
Plateau problem for $\gamma_i(n)$ in $G_{M_n}$ produces an embedded least-area
disk $D_n\subset G_{M_n}$ with  $\partial D_n=\gamma_i(n)\subset \partial B(\ve_n)$.
By construction, $\partial D_n$ is homotopically non-trivial in $B(\ve_n)\cap G_{M_n}$
and thus $D_n$ cannot be contained in $B(\ve_n)\cap G_{M_n}$. Hence, the disk $D_n$
lifts to a minimal disk in $\rth$ that is not contained in $\B(\ve_n)$ but with
boundary contained in $\partial \B(\ve_n)$.  This violates the mean convex hull
property for minimal surfaces and gives a contradiction, which proves that $\gamma_i(n)$
is homotopically  non-trivial in  $M_n$.

Suppose that the pair of curves $\{\gamma_i(n),\gamma_j(n)\}$, $i\neq j$, bounds an
annulus in $M_n$. Since these curves are homotopically non-trivial, solving the annular Plateau's problem
in $G_{M_n}$ (see Dehn's Lemma for Planar Domains in~\cite{my1}   as adapted
by the more general boundary conditions in~\cite{my2})
for $\{\gamma_i(n),\gamma_j(n)\}$, $i\neq j$ yields an embedded least-area
annulus $A_n\subset G_{M_n}$ with  $\partial A_n=\gamma_i(n)\cup\gamma_j(n)\subset \partial B(\ve_n) $.
By construction, $ A_n$ is not contained in $B(\ve_n)\cap G_{M_n}$. Let $\B(z_k, \ve_n)$
denote a lift of $B(\ve_n)$ into $\rth$ and note that if $k_1\neq k_2$
then $d_{\rth}(z_{k_1},z_{k_2})\geq I_0$. Since $\ve_n<I_0$ and
$\gamma_i(n), \gamma_j(n)\subset \partial B(\ve_n)$, each closed curve lifts to
a closed curve in  $\partial \B(z_k, \ve_n)$. Abusing the notation, let  $\gamma_i(n)$
denote the lift of $\gamma_i(n)$ in $\rth$ that is contained in $\B(z_1,\ve_n)$.
Let  $\wt{A}_n$ be the annulus that is the lift of $ A_n$    that has $\gamma_i(n)$
as one of its boundary component; note that $\wt{A}_n$ is not contained in $\B(z_1,\ve_n)$.
Suppose that the other boundary component of $\wt{A}_n$ is contained in a
certain $\B(z_k,\ve_n)$ where $z_k\neq  z_1$. This being the case, the boundary
of $\wt{A}_n$ consists of two components $\gamma_i(n)$ and $\gamma_j(n)$ with
$\gamma_i(n)\subset \B(z_1,\ve_n)$, $\gamma_j(n)\subset \B(z_k,\ve_n)$, $d_{\rth}(z_1,z_{k})\geq I_0$
and $\lim_{n\to\infty}\ve_n=0$.
This violates a well-known property of minimal surfaces (pass a
catenoid between the two balls until a first point of interior contact). This shows that
the other boundary component of $\wt{A}_n$ must also be contained in $\B(z_1,\ve_n)$.
This being the case, the fact that $\wt{A}_n$ is not contained in $\B(z_1,\ve_n)$
violates the mean convex hull property for minimal surfaces. Thus, we have obtained
a contradiction and completed the proof of the claim.\end{proof}

In order to finish the proof of the proposition, it suffices to show that if a Riemann
minimal example occurs, then there are two curves $\gamma'_1(n)$ and $\gamma'_2(n)$ as
described in the previous claim and which bound an annulus in $M_n$. This will be a
simple consequence of the following lemma.

\begin{lemma}\label{genus}
Let $\Sigma$ be a closed, possibly disconnected, surface of positive genus $g$  and let $\Gamma$ be a
collection of simple closed curves in $\Sigma$ that are homotopically non-trivial and
pair-wise disjoint. If the number of curves in $\Gamma$ is greater than $3g-2$, then
there exists  a pair of curves in $\Gamma$ that bounds an annulus in $\Sigma$.
\end{lemma}
\begin{proof}
 Let $\Gamma:=\{\gamma_1,\dots, \gamma_g, \dots, \gamma_{g+k}\}$, $k>2g-2$,
be a collection of simple closed curves in $\Sigma$ that are homotopically non-trivial
and pair-wise disjoint. Without loss of generality, we may assume that
every component of $\Sigma$ contains an element of $\Gamma$; in particular,
we may assume that $\Sigma$ has no spherical components. If each component
of $\Sigma$ has genus 1, namely it is a 3-torus, then since
$g+k> g$,
the number of curves in $\Gamma$ is greater then the number of components. Therefore,
at least one component contains two elements of $\Gamma$ and thus the lemma holds
in this case. Similar reasoning demonstrates that
it suffices to prove the lemma under the additional hypothesis that
none of the  components of $\Sigma$ that
contains only one element of $\Gamma$ is a 3-torus. Otherwise, we would remove
such a component from $\Sigma$ and such an element from the collection.

By definition of genus, $\Sigma-[\bigcup_{i=1}^{g+k}\gamma_i]$
consists of at least $k+1$ connected components.
Namely, $\Sigma-[\bigcup_{i=1}^{g+k}\gamma_i]=\Sigma_1\cup\dots\cup \Sigma_n$ with $n\geq k+1$.
Suppose that none of these components is an annulus. Then, for each $i=1,\dots,n$, $\chi(\Sigma_i)\leq -1$,
where $\chi$ denotes the Euler characteristic function. Thus,
\[
2-2g\leq\chi(\Sigma)=\chi\left (\bigcup_{i=1}^n\Sigma_i\right)=\sum_{i=1}^n \chi(\Sigma_i)\leq -n.
\]
Therefore, $n\leq 2g-2$ which gives that $g+k\leq g+n-1\leq 3g-3$. In other words, if the
number of curves in $\Gamma$ is greater than $3g-3$, then at least one component
of $\Sigma-[\bigcup_{\gamma\in\Gamma}\gamma]$ is an annulus,
and by our previous discussion, the boundary curves of this
annulus are distinct elements in $\G$.
This completes the proof of the claim.
\end{proof}

In light of Lemma~\ref{genus}, if a Riemann minimal example occurs, let $n$ be sufficiently
large so that the number of curves in equation~\eqref{curves} is greater than $3g-2$.
By Claim~\ref{nontriv} such curves are homotopically non-trivial in $M_n$ and thus, using
Lemma~\ref{genus} gives that at least two of them bound an annulus in $M_n$. This contradicts
Claim~\ref{nontriv} and finishes the proof of the proposition.
\end{proof}

\section{Global properties due to singular points} \label{sec6}

In this section we study what implications the presence of  points in the singular set of convergence
$\Delta$ for the sequence $M_n$  has for
the global geometry of the surfaces $M_n$ nearby a fixed point in $\Delta$.

Let $q\in\Delta$ be a singular point. By the previous
section (see the discussion after equation~\eqref{inj2}) and after replacing
by a subsequence, there exists a sequence
of points $p'_n$ converging to $q$ and sequences of numbers $\delta_n$, $\rho_n$
converging to zero, with $\lim_{n\to\infty}\frac{\rho_n}{\delta_n}=\infty$,
such that $\wt M_n=\frac1{\delta_n}[\B(\rho_n)\cap M_n]$ converges (with multiplicity one or two)
on compact subsets of $\rth$ to
either a catenoid $\mathcal C$ or a properly embedded minimal surface with positive genus.
When only the former happens, we say that $q$ is a {\em catenoid singular point}. Note that since
the genus is additive and the genus of $M_n$ is bounded by $g$ independently of $n$,  after replacing
the sequence $M_n$
by a subsequence, the
number of singular points that are not catenoid singular points is at most $g$. Indeed we
will show later in  Proposition~\ref{singnogenus} that all singular points
are catenoid singular points.

Throughout the rest of the section we will
deal with catenoid singular points and will assume that there are at most $g$ singular points
that are not of catenoid-type.
Let $q\in\Delta$ be a catenoid singular point and let $\cC$ denote a limit
catenoid related to $q$ with related points $p'_n$ converging to $q$ as described
in the previous paragraph. Let $l_\cC$ denote the line in $\rth$ such
that $\cC$ is rotationally invariant around that line, and let $\Pi_\cC$ denote the plane
perpendicular to $l_\cC$ that is a plane of symmetry for $\cC$. Let $\gamma_\cC$ denote the
geodesic circle in $\cC$ that is obtained by intersecting $\Pi_\cC$ with $\cC$ and
let $c_{\mathcal C}$ denote the point that is the center of this circle.
We can also associate a sequence of simple closed
curves $\gamma_n(q)\subset M_n$ corresponding to the curves in $\wt M_n\cap \Pi_\cC$
that are converging to $\gamma_\cC$ (if the multiplicity of convergence is two then we make a
choice for one of the two almost-circles in $\wt M_n\cap \Pi_\cC$ that give rise to the
limit $\cC$). We call the curve $\gamma_n(q)$ a {\em singular loop at $q$}
and we denote the points in $N$ corresponding to $c_{\mathcal C}$ by $c_n(q)$ and we refer to $c_n(q)$
as the {\em center of $\gamma_n(q)$}.

Recall that given a catenoid $\cC$, the flux of $\gamma_\cC$ is non-zero. Therefore,
by the nature of the convergence, the flux of $\gamma_n(q)$ is also non-zero. Since the
flux is a homological invariant, this implies that $\gamma_n(q)$ is homotopically non-trivial in $M_n$.

Suppose $q_1, q_2\in\Delta$ are two distinct catenoid singular points that are the limit of respective
points $p'_n(1),p'_n(2)\in  M_n$, as described in the previous paragraphs, and
let $\gamma_n(q_1),\gamma_n(q_2)$ be sequences of singular loops at $q_1$ and $q_2$.
Suppose that $\gamma_n(q_1)\cup\gamma_n(q_2)$ is the boundary of an annulus $\wt A_n(q_1,q_2)$ in $M_n$.
Since the lengths of $\gamma_n(q_1)$ and $\gamma_n(q_2)$ are going to zero, the singular
loops lift to closed curves in $\rth$ and the annulus $\wt A_n(q_1,q_2)$ lifts to an
annulus $A_n(q_1,q_2)$ in $\rth$.  Abusing the notation, we denote the   boundary
of $A_n(q_1,q_2)$ by $ \gamma_n(q_1)\cup  \gamma_n(q_2)$ and let $c_n(q_1)$ and $c_n(q_2)$
be points in $\rth$ corresponding to  the centers of $\gamma_n(q_1)$ and $\gamma_n(q_2)$ and
related to the chosen lifts. Let $l_n(q_1,q_2)$ be the straight line containing $c_n(q_1)$
and $c_n(q_2)$ and let $C_n(q_1,q_2,R)$ denote the cylinder of radius $R$ around $l_n(q_1,q_2)$.
Thus, by construction and  for $n$ sufficiently large,
$\gamma_n(q_1)\cup\gamma_n(q_2)\subset C_n(q_1,q_2,\delta_n)$ but $A_n(q_1,q_2)$
is NOT contained in $C_n(q_1,q_2,\delta_n)$. The mean curvature comparison principle
shows that $A_n(q_1,q_2)$  is not contained in $C_n(q_1,q_2,\frac1{2H_n})$.

Let $z_n$ be a point in $A_n(q_1,q_2)$ that is farthest away from $l_n(q_1,q_2)$.
To simplify the notation, after applying a sequence of translations of $\mathbb{R}^3$, assume
that the origin $\vec 0\in l_n(q_1,q_2)$ and the projection
of $z_n$ onto $l_n(q_1,q_2)$ is the origin. Let $r_{z_n}$ be the
ray $\{s\frac{z_n}{|z_n|}\mid s>0\}$ and for $t\in (0,|z_n|]$, let
$\Pi(z_n)_t$ be the plane perpendicular to $r_{z_n}$ at the point $t\frac{z_n}{|z_n|}$.
Note that the mean curvature vector at $z_n$ is $-H_n \frac{z_n}{|z_n|}$.
Abusing the notation, let $A_n(q_1,q_2)$ denote the connected component of
$A_n(q_1,q_2)-\Pi(z_n)_{\delta_n}$ that contains $z_n$ and let $\cH_n$ denote
the half-space of $\rth$ that contains $z_n$ and has $\Pi(z_n)_{\delta_n}$ as
its boundary. Since $\cH_n$ is simply connected  and $\partial A_n(q_1,q_2)$ is
contained in $\Pi(z_n)_{\delta_n}$, $A_n(q_1,q_2)$ separates $\cH_n$ into two
components. One of these component is bounded and we denote its closure
by $G_{A_n}$. Since the mean curvature vector at $z_n$ is $-H_n \frac{z_n}{|z_n|}$ and $z_n$
is a point in $A_n(q_1,q_2)$ that is furthest away from  $\Pi(z_n)_{\delta_n}$,
then $G_{A_n}$ is mean convex. Recall that by Theorem~\ref{separate} and its proof, the component $M'_n$
of $\Pi^{-1}(M_n)$ in $\rth$ that contains $A_n(q_1,q_2)$ separates $\rth$ into two components.
One of these components is mean convex and we denote its closure by $G_{M'_n}$.
Let $W_n=G_{A_n}\cap G_{M'_n}$. Note that $z_n\in \partial W_n$ is a point of $W_n$
that is furthest away from $\Pi(z_n)_{\delta_n}$.

A standard application of the Alexandrov reflection principle to the compact
mean-convex region $W_n$, using the family of
planes $\Pi(z_n)_t$, gives that the connected component $A^+_n(q_1,q_2)$ of
$A_n(q_1,q_2)- \Pi(z_n)_{\frac{\delta_n+|z_n|}{2}}$
containing $z_n$ is graphical over its projection to $\Pi(z_n)_{\frac{\delta_n+|z_n|}{2}}$
and the reflected image $A^-_n(q_1,q_2)$ of $A^+_n(q_1,q_2)$ in the
plane $\Pi(z_n)_{\frac{\delta_n+|z_n|}{2}}$ intersects
$M_n'$ only along the boundary of $A^+_n(q_1,q_2)$.
 Since $\delta_n$ is going to zero as $n$ goes to infinity, and $|z_n|\geq \frac1{2H_n}\geq\frac1{2b}$,
 then  we can assume that
 \begin{equation}\label{equations}
 |z_n|-\frac1{6b}>\frac{\delta_n+|z_n| }2,
 \end{equation}
 that is the distance from $z_n$ to the plane $\Pi(z_n)_{\frac{\delta_n+|z_n|}{2}}$ is at least $\frac 1{6b}$.

 Let $A^*_n(q_1,q_2)$ be the connected component of $A^+_n(q_1,q_2)- \Pi(z_n)_{|z_n|-\frac 1{12b}}$
 that contains $z_n$. By construction, a point in $A^*_n(q_1,q_2)$ is a point in $A^+_n(q_1,q_2)$
 at distance at least $\frac 1{12b}$ from the boundary of $A^+_n(q_1,q_2)$. Thus,
 applying the uniform curvature
estimates in~\cite{rst1} for oriented  graphs with constant mean
curvature (graphs are stable with curvature estimates away from their boundaries), gives that
points in $A^*_n(q_1,q_2)$ satisfy a uniform curvature estimate.

Furthermore, the same standard application of the Alexandrov reflection principle implies also
the following. Let $G(q_1,q_2)$ be the bounded open region of $\rth$ contained
between  $A^*_n(q_1,q_2)$ and its reflection across  the plane $\Pi(z_n)_{|z_n|-\frac 1{12b}}$.
Then $G(q_1,q_2)$ is contained in the interior of $W_n$ and there exists $\ve_1>0$
such that $\textrm{Volume}(G(q_1,q_2))>\ve_1$ that is
independent of $n$ (for $n$ sufficiently large). In particular, if $\wt G(q_1,q_2)$
denotes the image of $G(q_1,q_2)$ into $N$ via the universal covering map, then $\wt G(q_1,q_2)$ is
contained in $G_{M_n}$, that is the closure of the mean convex component of $T-M_n$,
and  $\textrm{Volume}(\wt G(q_1,q_2))>\ve_1$. Moreover,  if $\wt G(q_1,q_2)$
and  $\wt G(p_1,p_2)$ are regions of $N$ related to two distinct annuli, then these regions are disjoint.

\section{Bounding the number of singular points}\label{secboundsing}

In this section we bound the number of  points in $\Delta$. Since the number of singular points that
are not catenoid singular points is at most $g$, it suffices to bound the number of catenoid singular points.

Let $\{q_1,\dots, q_m\}\in \Delta$ be a collection of catenoid singular points.
It is important to remark that in what follows, the integer $n\in \N$
is chosen sufficiently large so that the estimates
of previous sections, such as those in appearing in \eqref{equations},
make sense at each of the points in $\{q_1,\dots, q_m\}$.
By definition and the discussion in the previous section,
to each $q_i$ corresponds a sequence of singular loops $\gamma_n(q_i)$ and  such loops are
homotopically non-trivial. Thus, by applying Claim~\ref{genus}, if $m>k(3g-2)$  we obtain at least
$k$ annuli $A_1,\dots, A_k$ with pairs of singular loops as their boundaries. Note
that if $A_i\cap A_j\neq \emptyset$ then their intersection must be an annulus with
a pair of singular loops as its boundary. Therefore, after possibly replacing the collection
of annuli $A_1,\dots, A_k$ with a different collection, we can assume that the annuli are pairwise disjoint.

For every $i=1,\dots , k$, let $\wt G_i\subset G_{M_n}$ denote the region of $N$
related to $A_i$ and obtained by applying the Alexandrov reflection principle, as
described in the last paragraph of the previous section.  Recall that there exists $\ve_1>0$,
independent of $n$ and $i$ such that $\textrm{Volume}(\wt G_i)\geq \ve_1$ and
that $\wt G_i \cap \wt G_j=\O$ if $i\neq j$. Then we have the following inequality:
\[
k\ve_1\leq \sum^{k}_{i=1}\textrm{Volume}(\wt G_i)
=\textrm{Volume}(\bigcup^{k}_{i=1}\wt G_i)\leq  \textrm{Volume}(N).
\]
Therefore,
\[
k\leq  \frac{ \textrm{Volume}(N)}{\ve_1}
\]
which implies that
 \[
m\leq  \frac{ \textrm{Volume}(N)}{\ve_1}(3g-2).
\]

Thus, adding also the bound for the number of singular points that are not catenoid singular
points, the previous inequality gives that the number of singular points is bounded by
\[
 \frac{ \textrm{Volume}(N)}{\ve_1}(3g-2)+g.
\]

\begin{remark}\label{rmkboundsing} {\em
Note that the proof that the number of singular points is bounded does NOT use the fact
that the area of $M_n$ is becoming arbitrarily large.}
\end{remark}

\section{The final contradiction} \label{sec:final-cont}

In this section we prove that   the area of $M_n$ is uniformly bounded from above. This
contradicts the fact that $\textrm{Area}(M_n)>n$ and this contradiction
will finish the proof of Theorem~\ref{area2}.

Let $\Delta:=\{q_1, \dots, q_m\}$ be the set of singular points. The results
in the previous section give that
\[
m\leq \frac{ \textrm{Volume}(N)}{\ve_1}(3g-2)+g.
\]
Note that since $M_n$ separates $N$ and the norms of the second fundamental forms of $M_n$
are uniformly bounded on compact sets of $N-\Delta$, by applying
Proposition~\ref{area51} the following holds. If $p\in N-\Delta$ and $\ve>0$
is such that $B_N(p,\ve)\cap\Delta=\emptyset$, then there exists a constant $T(\ve)$ such
that $\textrm{Area}(M_n \cap B_N(p,\frac\ve2))<T(\ve)$. A standard compactness argument
then gives that there exists a surface $M_\infty$ properly immersed in $N-\Delta$ such that,
up to a subsequence, $M_n-\Delta$  converges to $M_\infty$ on compact subsets of $N-\Delta$.
The surface $M_\infty$ has constant mean curvature $H$, for a certain $H\in [a,b]$.
Since $\Delta$ is finite, there exists $r>0$ such that for any $q\in\Delta$,
$B_N(q,r)\cap \Delta=q$ and $B_N(q,\tau)\cap M_\infty\neq \O$,  for any $\tau\in (0,r]$.

  \begin{claim}\label{convalex}
The sequence $M_n-\Delta$  converges to $M_\infty$ with multiplicity one and $M_\infty$
is strongly Alexandrov embedded in $N-\Delta$.
\end{claim}
\begin{proof}
Recall that $M_n$ separates $N$ into two regions and one of them, denoted by $G_{M_n}$,
is mean convex. Given $p\in M_\infty$, then there exists $\ve>0$ such that a pointed connected
  component of $B_N(p,\ve)\cap M_\infty$, which we denote by $\Omega(p)$, is a graph over the
  tangent plane to $M_\infty$ at $p$, $T_pM_\infty$, and it is the limit of a sequence of
  graph $U_n(p)\subset M_n$ over $T_pM_\infty$. Note that $\Omega(p)$ has a well-defined
  mean curvature vector obtained as the limit of the mean curvature vectors of $U_n(p)$.
  If $M_n$ contained more than one graph over  $T_pM_\infty$ converging to $\Omega(p)$,
  since $M_n$ separates $N$, then the mean curvature vectors would change orientation on
  consecutive graphs in $M_n$ and the mean curvature vector on $\Omega(p)$ would not
  be well-defined. This proves that $M_n-\Delta$  converges to $M_\infty$ with multiplicity one.

  By the previous argument and the fact that the surfaces $M_n-\Delta$ converge
  to $M_\infty$ with multiplicity one, then  the connected open regions that
  are $\textrm{Int}(G_{M_n})-\Delta$ converge to an open region $W$ in $N-\Delta$
  and $\partial W=M_\infty$. This shows that $M_\infty$ is strongly Alexandrov embedded
  in $N-\Delta$, which finishes the proof of the claim.
\end{proof}

By the nature of the convergence with multiplicity one, and since $M_\infty$ is properly
immersed in $N-\Delta$, for any $\ve>0$, there exists $K(\ve)$ such that
$$\lim_{n\to\infty}\mathrm{Area}(M_n\cap[N-\bigcup^m_{i=1}B_N(q_i,\ve)] )
=\mathrm{Area}(M_\infty\cap[N-\bigcup^m_{i=1}B_N(q_i,\ve) ])<K(\ve).$$

Fix $\ve>0$ such that $4e^{-\ve b}\geq 2$, where $b$ is the upper bound for
the mean curvature of $M_n$, $B_N(q_i,2\ve)$ is an open ball in $N$ and for any $i,j\in \{1,\dots, m\}$
with $i\neq j$, then $B_N(q_i,2\ve)\cap B_N(q_j,2\ve)=\emptyset$. Then, by the previous
argument, for each $i:=1,\dots, m$ and $n$ sufficiently large,
$$\mathrm{Area}(M_n\cap[N-\bigcup^m_{i=1}B_N(q_i,\ve) ])<K(\ve)+1.$$

Recall that $H_n\leq b$. By the monotonicity formula for $H_n$-surfaces, see for
instance~\cite{si1}, it follows that

$$\frac{\mathrm{Area}(M_n\cap B_N(q_i,2\ve))}{4\ve^2}
\geq e^{-\ve H_n}\frac{\mathrm{Area}(M_n\cap B_N(q_i,\ve))}{\ve^2}
\geq e^{-\ve b}\frac{\mathrm{Area}(M_n\cap B_N(q_i,\ve))}{\ve^2}$$

This implies that

$$\mathrm{Area}(M_n\cap B_N(q_i,2\ve))\geq4e^{-\ve b} \mathrm{Area}(M_n\cap B_N(q_i,\ve))
\geq2\mathrm{Area}(M_n\cap B_N(q_i,\ve)). $$

Therefore, for $n$ sufficiently large,

\[
\begin{split}
\mathrm{Area}(M_n\cap B_N(q_i,\ve))&\leq\mathrm{Area}(M_n\cap B_N(q_i,2\ve))
-\mathrm{Area}(M_n\cap B_N(q_i,\ve))\\
&=\mathrm{Area}(M_n\cap [B_N(q_i,2\ve)- B_N(q_i,\ve)])\\
&<K(\ve)+1.
\end{split}
\]

Finally, this implies that for $n$ large,
\[
\begin{split}
\mathrm{Area}(M_n)=& \mathrm{Area}(M_n\cap[N-\bigcup^m_{i=1}B_N(q_i,\ve) ])
+\sum_{i=1}^m\mathrm{Area}(M_n\cap B_N(q_i,\ve))\\
<&(m+1)(K(\ve)+1).
\end{split}
\]
Since $\ve$ is fixed, independent of $n$, and $m$
is at most $ \frac{ \textrm{Volume}(N)}{\ve_1}(3g-2)+g$,
this contradicts the fact $\textrm{Area}(M_n)>n$.
This contradiction completes the proof of Theorem~\ref{area2}.
\end{proof}

\subsection{The proof of Theorem~\ref{area2} for disconnected $H$-surfaces $M$} \label{sec:8.1}
We next explain
why Theorem~\ref{area2} holds for another choice
of constant $A'(N,a,b,g)$ when $M$ in its statement is not necessarily connected.
First we may assume that the
constants $A(N, a,b, g)$ given in
Theorem~\ref{area2} are increasing as a function of the genus $g$. Also notice
that any flat 3-torus $T$ with injectivity radius $I_0$ and diameter $d$
has a fixed upper bound $V(I_0,d)$ on its volume.  Observe that any collection of pairwise
 disjoint embedded $H$-spheres in such a $T$ with $H\in [a,b]$ bound a family of pairwise disjoint
balls in $T$ with volume at least $\frac43 \pi  a^3$ and the sum
of these volumes is bounded by  $V(I_0,d)$. Thus,
the number of spherical components of a disconnected closed $H$-surface $\Sigma$ of genus $g$
in $T$ is bounded by some  constant $S(I_0,d,a)\in \N$.  Therefore
if $M$ is a possibly {\em disconnected} surface satisfying the
other hypotheses of Theorem~\ref{area2}, then it can have at
most $S(I_0,d,a) +g$ components.  Hence, the area of $M$
is at most $$A'(N,a,b,g)=[S(I_0,d,a)+g]A(N,a,b,g),$$ which proves our desired claim.

\section{Compactness of $H$-surfaces in a flat 3-torus}\label{compactness}

In this section, we prove Theorem~\ref{seq-compact} from the Introduction. Let $N$ be
a flat 3-torus and let $M_n$ be a
sequence of closed $H_n$-surfaces  in $N$, $H_n\in[a,b]$, of genus at most $g$.
Theorem~\ref{area2} and the discussion at the end of the previous section gives
that there exists a constant $C$ independent of $n$ such that
\[
\sup_n\textrm{Area}(M_n)<C.
\]

By the results in Section~\ref{secboundsing} and in particular note Remark~\ref{rmkboundsing},
after passing to a subsequence, there exists a possibly empty finite set of points
$\Delta$, namely the set of singular points of convergence, such that $M_n$ has locally
bounded norm of the second fundamental form in $N-\Delta$.

The standard compactness argument already used   in Section~\ref{sec:final-cont},
see  Claim~\ref{convalex}, gives that there
exists a surface $M_\infty$ strongly Alexandrov embedded in $N-\Delta$ such that, up to a
subsequence, $M_n-\Delta$  converges to $M_\infty$ on compact subsets of $N-\Delta$ with multiplicity one.
The surface $M_\infty$ has constant mean curvature $H$, for a certain $H\in [a,b]$.
Moreover the convergence to $M_\infty$ has multiplicity one which implies that the
genus of $M_\infty$ is at most $g$.  Recall that $\Delta$ is in the closure of $M_\infty$.

\begin{claim}
The points in $\Delta$ are
removable singularities for $M_\infty$.
\end{claim}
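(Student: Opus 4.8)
The plan is to show that near each $q\in\Delta$ the limit $M_\infty$ is, after adjoining $q$, a smooth immersed constant mean curvature surface; combined over the finite set $\Delta$ this extends $M_\infty$ to a closed strongly Alexandrov embedded $H$-surface in all of $N$. First I would fix $q\in\Delta$ and, since $\Delta$ is finite, choose $r\in(0,I_0/2)$ with $\ov{B}_N(q,r)\cap\Delta=\{q\}$, and lift $B_N(q,r)$ isometrically to $\B(r)\subset\rth$ with $q$ corresponding to $\vec 0$. From the uniform area bound $\sup_n\mathrm{Area}(M_n)<C$ and the multiplicity-one convergence on compact subsets of $N-\Delta$, lower semicontinuity of area gives $\mathrm{Area}(M_\infty\cap(B_N(q,r)-\{q\}))<\infty$. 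Thus $M_\infty$ is a constant mean curvature surface of finite area in the punctured ball $\B(r)-\{\vec 0\}$, and by the monotonicity formula (see~\cite{si1}) its area density $\Theta(M_\infty,q)$ exists and is a finite positive integer.

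The main step is to pin down the local sheeted structure of $M_\infty$ near $q$. Here I would use the catenoid singular point analysis of Sections~\ref{sec5} and~\ref{sec6}: at the singular scale $\delta_n$ the blow-ups $\wt M_n$ converge to a catenoid, while on each fixed annulus $\{\,\rho\le|x|\le r\,\}$ the surfaces $M_n$ have uniformly bounded second fundamental form (they have locally bounded $|A|$ in $N-\Delta$) and hence converge smoothly and with multiplicity one to $M_\infty$. Combining the bounded curvature on annuli, the finiteness of $\Theta(M_\infty,q)$, and the fact that a blow-up of $M_\infty$ at $q$ has vanishing mean curvature and finite density, I would conclude that the tangent object at $q$ is a minimal cone smooth away from its vertex, hence a plane $\Pi$ through the origin with multiplicity $\Theta(M_\infty,q)$. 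Feeding this back, for $r$ small $M_\infty\cap(\B(r)-\{\vec 0\})$ is a disjoint union of $\Theta(M_\infty,q)$ punctured-disk sheets $\Sigma_i$, each a graph $\{z=u_i(x,y)\}$ over $\Pi$ with $\nabla u_i\to 0$ as $(x,y)\to\vec 0$. For a catenoid singular point the two oppositely flaring ends force $\Theta(M_\infty,q)=2$, and since the convergence on $N-\Delta$ has multiplicity one the two sheets are distinct away from $q$.

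Once the graphical structure is available, removal of the singularity is standard. Each $u_i$ is a bounded solution of the constant mean curvature equation
\[
\Div\!\left(\frac{\nabla u_i}{\sqrt{1+|\nabla u_i|^2}}\right)=2H
\]
on a punctured planar disk, and because $\nabla u_i\to 0$ at the puncture this equation is uniformly elliptic near $\vec 0$. An isolated interior point has zero capacity in the plane, so by the removable singularity theorem for bounded solutions of uniformly elliptic quasilinear equations each $u_i$ extends smoothly across $\vec 0$. Hence each sheet $\Sigma_i$ extends to a smooth constant mean curvature disk through $q$, and $M_\infty$ extends across $q$ to an immersion that remains injective on the interior of the associated mean convex domain. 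Therefore $q$ is a removable singularity, and since $\Delta$ is finite the same applies at every point of $\Delta$.

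I expect the main obstacle to be the middle paragraph, namely upgrading \emph{``finite density plus bounded curvature on every fixed annulus''} to the uniform graphical picture \emph{down to} the point $q$. The difficulty is that the curvature bound coming from local boundedness of $|A|$ in $N-\Delta$ may a priori degenerate as $q$ is approached, so the control on the sheets cannot simply be read off one annulus at a time; one must instead feed the collapsing-catenoid geometry of Section~\ref{sec6} together with the monotonicity formula into the tangent cone analysis, both to exclude a non-graphical or higher-multiplicity limit and to obtain the decay $\nabla u_i\to 0$. Once this uniform structure is secured, the finite-area and bounded-mean-curvature hypotheses make the concluding PDE removable singularity step routine.
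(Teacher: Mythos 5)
Your overall strategy (finite area near $q$, monotonicity, tangent cone, then a PDE removable singularity theorem for graphs) is genuinely different from the paper's, but the middle step you yourself flag as the ``main obstacle'' is a real gap, not just a technical chore, and as written the argument does not close. Finite density plus locally bounded $|A_{M_n}|$ on each \emph{fixed} annulus around $q$ is not enough to conclude that the tangent cone of $M_\infty$ at $q$ is smooth away from its vertex: smoothness of the blow-up limit away from the origin requires a curvature bound that is \emph{scale-invariant}, i.e.\ of the form $|A_{M_\infty}|(p)\leq C/d_N(p,q)$, and nothing in your argument produces it. Worse, even granting that the tangent cone is a plane of multiplicity $2$ (the relevant case at a catenoid singular point), Allard-type regularity only yields the graphical decomposition into sheets with $\nabla u_i\to 0$ at multiplicity one; at multiplicity two the passage from a planar tangent cone to two separate graphical sheets is precisely the hard content of removable singularity theorems and needs an extra input (embeddedness plus separation arguments, stability, or again a $C/d$ curvature decay). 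So the final quasilinear-PDE step, which is indeed routine, rests on a structure you have not established.

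The paper supplies exactly the missing ingredient by a different mechanism that you never invoke: the genus bound. Gauss--Bonnet together with the Gauss equation gives $2\pi\chi(M_n)=\int_{M_n}(2H_n^2-\tfrac12|A_{M_n}|^2)$, so bounded genus, $H_n\leq b$ and the uniform area bound yield $\int_{M_n}|A_{M_n}|^2<C_a$ independently of $n$, hence $\int_{M_\infty}|A_{M_\infty}|^2\leq C_a$ by multiplicity-one convergence. A rescaling argument then converts this total curvature bound into the pointwise decay $|A_{M_\infty}|(p)<C_b/d_N(p,\Delta)$, and at that point one quotes the local removable singularity theorem for weak $H$-laminations (Theorem~1.2 in~\cite{mpr21}) to extend $M_\infty$ smoothly across $\Delta$. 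If you want to salvage your route, the quadratic curvature bound and the resulting $C/d$ decay are what you need to justify both the regularity of the tangent cone away from the vertex and the two-sheeted graphical structure; with that in hand your concluding PDE step would go through, but without it the proof is incomplete.
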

\begin{proof}
By Theorem~\ref{area2} and the discussion at the end of the previous section, there
exists a constant $C>0$ such that $\sup_n\textrm{Area}(M_n)<C$.
Gauss-Bonnet Theorem together with the Gauss equation gives that
\[
2\pi \chi(M_n)=\int_{M_n} K_{M_n}=\int_{M_n}(2H_n^2-\frac{|A_{M_n}|^2}2).
\]

Since the genus of $M_n$ is at most $g$, $H_n\leq b$ and $\textrm{Area}(M_n)<C$ then,
using the previous inequality gives that there exists a constant $C_a$, independent of $n$, such that
\[
\int_{M_n}|A_{M_n}|^2<C_a.
\]

Since $\int_{M_n}|A_{M_n}|^2=\int_{M_n-\Delta}|A_{M_n}|^2$, by the nature of the
convergence with multiplicity one it follows that
\[
\int_{M_\infty}|A_{M_\infty}|^2\leq C_a.
\]
By applying a rescaling argument around each point $q\in\Delta$, this gives that
there exists a constant $C_b>0$ such that
for any $p\in M_\infty$,
\[
|A_{M_\infty}|(p)<\frac{C_b}{d_N(p,\Delta)}.
\]
Since $M_\infty$ is a weak $H$-lamination of $N-\Delta$,
Theorem~1.2 in~\cite{mpr21} implies that $M_\infty$
extends smoothly across $\Delta$ to a weak $H$-lamination of $N$ and the points in $\Delta$ are
removable singularities for $M_\infty$.
\end{proof}

Since $M_\infty$ extends across $\Delta$, if
by abusing the notation
we denote by $M_\infty$ the related surface $M_\infty\cup\Delta$, then $M_\infty$
is strongly Alexandrov embedded in $N$.

It remains to show that the singular set of convergence $\Delta$ is contained in the set of
points of self-intersection of $M_\infty$. Note that if $p\in M_\infty$ is not a point of
self-intersection of $M_\infty$, then
\begin{equation}\label{density1}
\lim_{r\to 0}\frac{\textrm{Area}(M_\infty\cap [B_N(p,2r)-B_n(p,r)])}{\pi r^2}=3.
\end{equation}
Instead, by the description at point of self-intersection, if $p\in M_\infty$ is a point of
self-intersection of $M_\infty$, then
\begin{equation}\label{density2}
\lim_{r\to 0}\frac{\textrm{Area}(M_\infty\cap [B_N(p,2r)-B_n(p,r)])}{\pi r^2}=6.
\end{equation}
Recall that by the monotonicity formula for $H_n$-surfaces, since $H_n\leq b$, it follows
that for any $q\in M_n$ and $r_1<2r_2<I_0$ the following holds,
\[
\begin{split}
\frac{\mathrm{Area}(M_n\cap B_N(q,r_2))}{\pi r_2^2}&\geq e^{-(r_2-r_1) H_n}
\frac{\mathrm{Area}(M_n\cap B_N(q,r_1))}{\pi r_1^2}\\ &
\geq e^{-r_2 b}\frac{\mathrm{Area}(M_n\cap B_N(q,r_1))}{\pi r_1^2}.
\end{split}
\]
Therefore, applying this inequality, we obtain that
\begin{equation}\label{8/3}
\begin{split}
\frac{\mathrm{Area}(M_n\cap [B_N(q,2r_2)- B_N(q,r_2)])}{\pi r_2^2}&
\geq (4 e^{-r_2 b}-1)\frac{\mathrm{Area}(M_n\cap B_N(q,r_2))}{\pi r_2^2}\\
&\geq  (4 e^{-r_2 b}-1)e^{-r_2 b} \frac{\mathrm{Area}(M_n\cap B_N(q,r_1))}{\pi r_1^2} \\ &
\geq   \frac 83 \frac{\mathrm{Area}(M_n\cap B_N(q,r_1))}{\pi r_1^2},
\end{split}
\end{equation}
if $r_2$ is chosen so that $4 e^{-r_2 b}-1\geq  \frac 83$.

Let $q\in \Delta$ be a singular point. If $q$ is a catenoid singular point then there
exists $q_n\in M_n$ and $\delta_n>0$ such that $\lim_{n\to\infty}q_n=q$, $\lim_{n\to\infty}\delta_n=0$ and
\begin{equation}\label{genuszero}
\frac{\mathrm{Area}(M_n\cap B_N(q_n,\delta_n))}{\pi \delta_n^2}\geq \frac 32.
\end{equation}
If $q\in \Delta$ is a singular point that is NOT a catenoid singular point then the
limit surface given by Proposition~\ref{limitsurface} is a properly embedded minimal
surface with positive genus at most $g$. Let $M_\infty$ denote this limit surface.
There are three cases to consider. If $M_\infty$ has one end, then it is a helicoid
with a finite number of handles attached to it, see~\cite{bb1,mpe3,mr8}. If $M_\infty$
has finite topology and more than one end, then it has at least three ends~\cite{sc1}.
If $M_\infty$ has infinite topology, then it is a Riemann minimal example with a finite
number of handles attached to it, see~\cite{mpr6}. If either of the three cases happens, then there
exists $q_n\in M_n$ and $\delta_n>0$ such that $\lim_{n\to\infty}q_n=q$, $\lim_{n\to\infty}\delta_n=0$ and
\begin{equation}\label{positivegenus}
\frac{\mathrm{Area}(M_n\cap B_N(q_n,\delta_n))}{\pi \delta_n^2}\geq \frac 52.
\end{equation}

If $q\in \Delta$ is NOT a point of self-intersection for $M_\infty$,
by equation~\eqref{density1} we can fix $r>0$ arbitrarily small such that
\[
\frac{\textrm{Area}(M_\infty\cap [B_N(q,2r)-B_N(q,r)])}{\pi r^2}<\frac 72.
\]
However, by equations~\eqref{8/3},~\eqref{genuszero} and~\eqref{positivegenus},
\[
\frac{\mathrm{Area}(M_n\cap [B_N(q_n,2r)- B_N(q_n,r)])}{\pi r^2}\geq 4>\frac 72.
\]
Since the convergence away from the singular points is smooth with multiplicity one, it holds that
\[
\lim_{n\to\infty}\frac{\mathrm{Area}(M_n\cap [B_N(q_n,2r)- B_N(q_n,r)])}{\pi r^2}
= \frac{\textrm{Area}(M_\infty\cap [B_N(q,2r)-B_N(q,r)])}{\pi r^2}.
\]
When $n$ is sufficiently large, this leads to a contradiction. This
finishes the proof that $\Delta$ is contained in the set of points of
self-intersection of $M_\infty$, which finishes the proof of the theorem.

The previous
argument can be used to rule out the occurrence of singular points that are not
catenoid singular points.

\begin{proposition}\label{singnogenus}
Any singular point in $\Delta$ is a catenoid singular point.
\end{proposition}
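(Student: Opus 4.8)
The plan is to argue by contradiction, recycling the density comparison from the preceding part of Section~\ref{compactness} but now exploiting the sharper lower area bound that positive genus forces. Suppose some $q\in\Delta$ is a singular point that is not a catenoid singular point. By the argument just completed (the one showing that $\Delta$ lies in the self-intersection set of $M_\infty$), the point $q$ is a point of self-intersection of $M_\infty$; since self-intersection points of the strongly Alexandrov embedded limit are transverse double points, the annular area density of $M_\infty$ at $q$ equals $6$ by~\eqref{density2}. The goal is to contradict this by producing a strictly larger annular density at $q$.

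First I would record the small-scale input. Because $q$ is not a catenoid singular point, Proposition~\ref{limitsurface} forces the rescaled limit to be a properly embedded minimal surface of positive genus, and the case analysis preceding the proposition yields points $q_n\in M_n$ and scales $\delta_n\to 0$ with $q_n\to q$ for which the density bound~\eqref{positivegenus} holds, namely $\mathrm{Area}(M_n\cap B_N(q_n,\delta_n))/(\pi\delta_n^2)\geq \tfrac52$. Next I would fix $r>0$ small enough that $4e^{-rb}-1\geq\tfrac83$ and apply the monotonicity inequality~\eqref{8/3} with $r_1=\delta_n$ and $r_2=r$; for $n$ large this gives
\[
\frac{\mathrm{Area}(M_n\cap[B_N(q_n,2r)-B_N(q_n,r)])}{\pi r^2}\;\geq\;\frac83\cdot\frac52\;=\;\frac{20}{3}.
\]
Letting $n\to\infty$ and using the multiplicity-one convergence of $M_n$ to $M_\infty$ away from $\Delta$, then letting $r\to 0$, I obtain that the annular density of $M_\infty$ at $q$ is at least $\tfrac{20}{3}>6$, contradicting the value $6$ recorded above. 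Hence no such $q$ exists and every singular point is a catenoid singular point.

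The only delicate point is the numerology, and it is exactly what makes catenoid points survive while positive-genus points do not: the monotonicity factor $\tfrac83$ converts a small-scale density $\Theta$ into an annular density at least $\tfrac83\Theta$, and a self-intersection point has annular density $6$, so a contradiction requires $\Theta>\tfrac94$. The positive-genus bound $\tfrac52$ clears this threshold, whereas the catenoid bound $\tfrac32$ (from~\eqref{genuszero}) only yields annular density $\geq 4<6$ and is therefore consistent with $q$ being an ordinary transverse double point. Thus the main thing to be careful about is not a new estimate but the order of the two limits ($n\to\infty$ first, then $r\to 0$) together with the requirement $4e^{-rb}-1\geq\tfrac83$ needed to invoke~\eqref{8/3}; the harder structural input, the density lower bound~\eqref{positivegenus} for the positive-genus models, has already been established in the discussion preceding the proposition.
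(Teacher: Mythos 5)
Your proposal is correct and follows essentially the same route as the paper: both combine the positive-genus small-scale density bound~\eqref{positivegenus} with the monotonicity inequality~\eqref{8/3} to force an annular area ratio of at least $\tfrac83\cdot\tfrac52=\tfrac{20}{3}$ near $q$, contradicting the limit density of $M_\infty$ there (the paper bounds that density by $\tfrac{13}{2}$ using the dichotomy in~\eqref{density1}--\eqref{density2}, while you pin it to exactly $6$ via the self-intersection result already proved; either works). The extra remark on why the catenoid value $\tfrac32$ escapes the contradiction matches the paper's implicit numerology.
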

\begin{proof}
Suppose $q\in\Delta$ is a singular point that is not a
catenoid singular point. By equation~\eqref{density1} and equation~\eqref{density2}
we can fix $r>0$ arbitrarily small such that
\[
\frac{\textrm{Area}(M_\infty\cap [B_N(q,2r)-B_n(q,r)])}{\pi r^2}<\frac{13}2.
\]
However, by equation~\eqref{8/3} and~\eqref{positivegenus},
\[
\frac{\mathrm{Area}(M_n\cap [B_N(q_n,2r)- B_N(q_n,r)])}{\pi r^2}\geq \frac 83 \cdot \frac 52>\frac{13}2,
\]
where $\lim_{n\to\infty}q_n=q$.
Since the convergence away from the singular points is smooth with multiplicity one,
this leads to a contradiction, for $n$ sufficiently large.
\end{proof}


\section{Analysis of the area bound  and the proofs of Theorems~\ref{area} and \ref{seq-compact}} \label{sec:10}

In this section we prove that the area bound only depends on certain properties of the flat 3-torus.
Throughout this section we let $\cT(d,I_0)$ be the space of   flat 3-tori, satisfying
\ben \item $d$ is an upper bound on the
diameter of $N$; \item $I_0>0$ is a lower bound on the injectivity radius of $N$.
\een
Recall that the first property gives that the volume of each flat
3-torus in $\cT(d,I_0)$ is bounded from above by $\frac {\pi d^3}6$.
Note also that we can view $\cT(d,I_0)$ as a compact set of Riemannian metrics on the smooth
manifold $T=\esf^1\times\esf^1\times \esf^1$.
Since the universal covers of each flat 3-torus in
 $\cT(d,I_0)$ are all $\rth$ with the flat metric, we can view these flat 3-tori to be quotients
 of $\rth$ by smoothly varying latices.

We now prove Theorem~\ref{area}. Arguing by contradiction, suppose
that $f_n\colon M_n \to N_n$ is a sequence of closed,
possibly disconnected,
$H$-surfaces where:\ben
\item $N_n\in \cT(d,I_0)$;
\item the area of $M_n$ is greater than $n$;
\item the genus of $M_n$ is at most some fixed $g\in\N$.
\een
Suppose that the flat 3-tori $N_n$ converge to a flat 3-torus $N$,
i.e., the metrics converge to a flat metric on
$T$.  Then we can view the injective mappings $f_n\colon M_n \to N_n$
to correspond to quasi-isometric mappings
into $N$.

There are two cases to consider.  If the injectivity radii of
the surfaces $M_n$ are bounded away from
zero, then the norms of their second fundamental forms are bounded and the
argument in the proof of Claim~\ref{injec} gives a contradiction; more precisely,
the surfaces $M_n$ have uniform regular $\ve$-neighborhoods on their
mean convex sides that,
after replacing by a subsequence, converge smoothly with multiplicity one to
a regular $\ve$-neighborhood on the mean convex side of a smooth strongly
Alexandrov embedded closed surface of genus
at most $g$ and such convergence has multiplicity one.

Suppose now that, after replacing by a subsequence, the injectivity radius of $M_n$ is
less than $1/n$ and that, after choosing a subsequence,  there is a
point $q_1\in N$ and points $p_n\in f_n(M_n)\subset N$
where  the $I_{M_n}(p_n)<1/n$; here we are viewing $f_n(M_n)$ as being
contained in both $N_n$ and the related limit $N$.
Arguing exactly as in the proof of Theorem~\ref{area2}, we can define in a
new subsequence (also labeled as $M_n$) and
a set $\Delta\subset N$ which is the set of singular points of
convergence for $M_n$.  As previously, rescaling arguments
on the scale of the injectivity radius, show that for
any $q\in \Delta$, we can find points $p_n'\in M_n$
of almost minimal injectivity radius such that in small balls
in $N$ centered at the points $p_n'$,
the surfaces $M_n$ have the
appearance  of a complete
properly embedded minimal surfaces $M_\infty$ in
$\rth$ with finite genus at most $g$ or a parking garage structure of $\rth$
with two oppositely handed columns.

As in our previous study, after replacing by a subsequence, we may assume that
at most $g$ points in $\Delta$ can produce a limit minimal surface $M_\infty$ of positive genus.
As before, the only possible limit minimal surface $M_\infty$ of genus zero is the catenoid.
From this point on, all of the arguments that go into the proof of Theorem~\ref{area2} work to show that
the set $\Delta$ is finite and all of these points correspond to the case that the limit surface  $M_\infty$
that forms near them is of catenoid type; as before, these arguments also yield a contradiction
to the assumption that the areas of the originally chosen surfaces $M_n$ is unbounded.
This contradiction completes the proof of Theorem~\ref{area}.

\begin{remark}{\em
The arguments in this section can be applied to prove that  Theorem~\ref{seq-compact} holds in
the more general
setting, namely when the surface $M_n$ lies in a flat 3-torus $N_n$
whose injective radius is at least $I_0>0$ and whose diameter is bounded from above by some $d>0$.
In this case a subsequence of the 3-tori converge to a flat 3-torus $N_\infty$ and a subsequence
of the related surfaces converge
to a strongly Alexandrov embedded surface $M_\infty$ in $N_\infty$.
}\end{remark}

%
%
%

\section{Some images of
genus-3 H-surfaces in flat 3-tori and the dependence of $A(g,a,b,d,I_0)$ on its variables}
One way to construct examples of triply periodic surfaces of non-zero constant mean curvature 1
in $\rth$ is to solve Plateaus problem for a geodesic polygon in the
-sphere $\esf^3=\{|x|=1 \mid x\in \R^4\}$, isometrically map these least area surfaces
into $\rth$ by the Lawson correspondence~\cite{la1},
and then extend them to all of $\rth$ by reflections and translations.
In Figure~\ref{star} we present several images of the fundamental regions of such constant mean curvature
surfaces in a fundamental region of the flat 3-torus $\rth/\Z^3$.
These images were kindly given to us by Karsten  Gro\ss e-Brauckmann.

\begin{figure}[h]
\begin{center}
\includegraphics[width=1.9in]{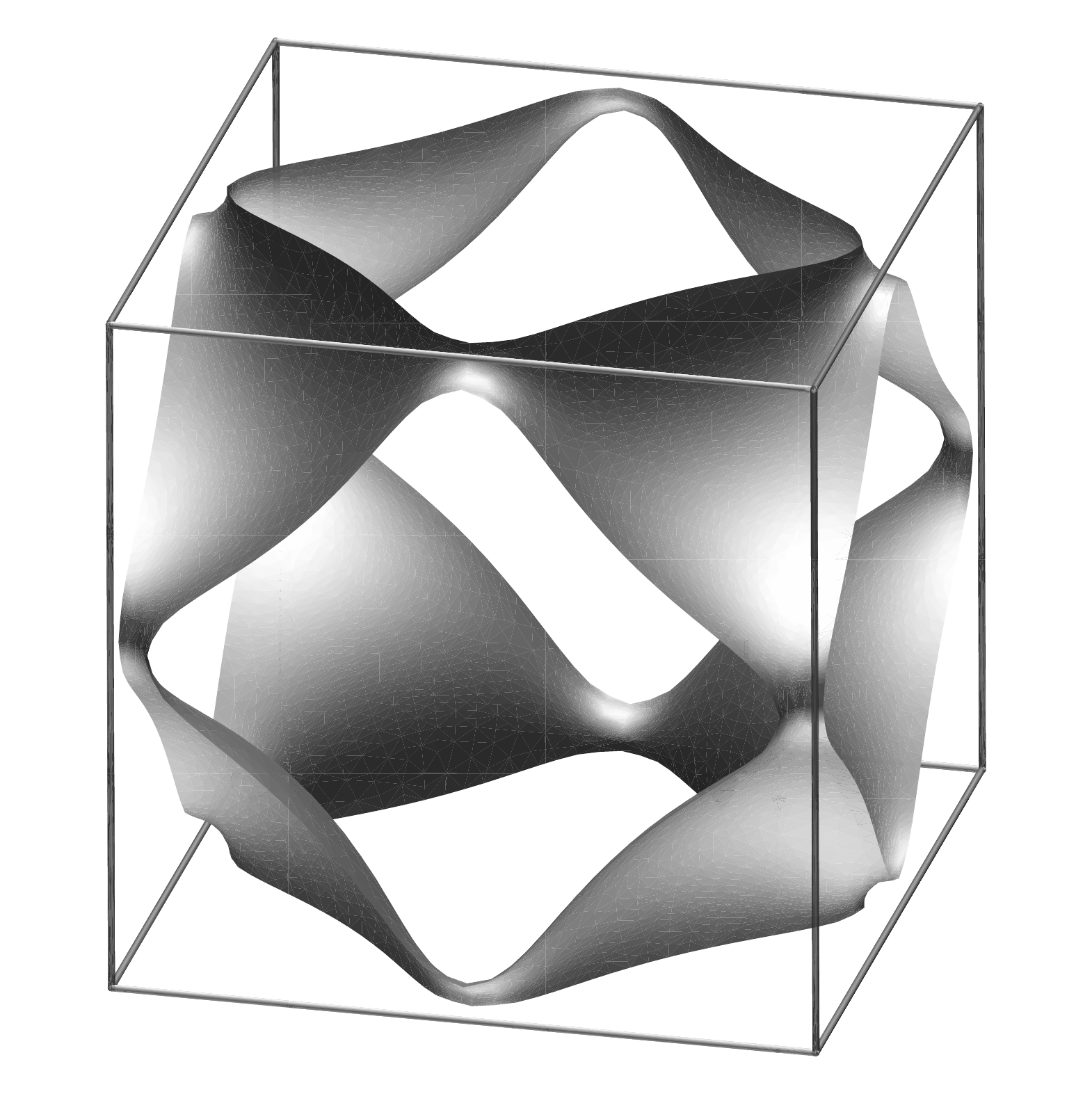}
\includegraphics[width=1.9in]{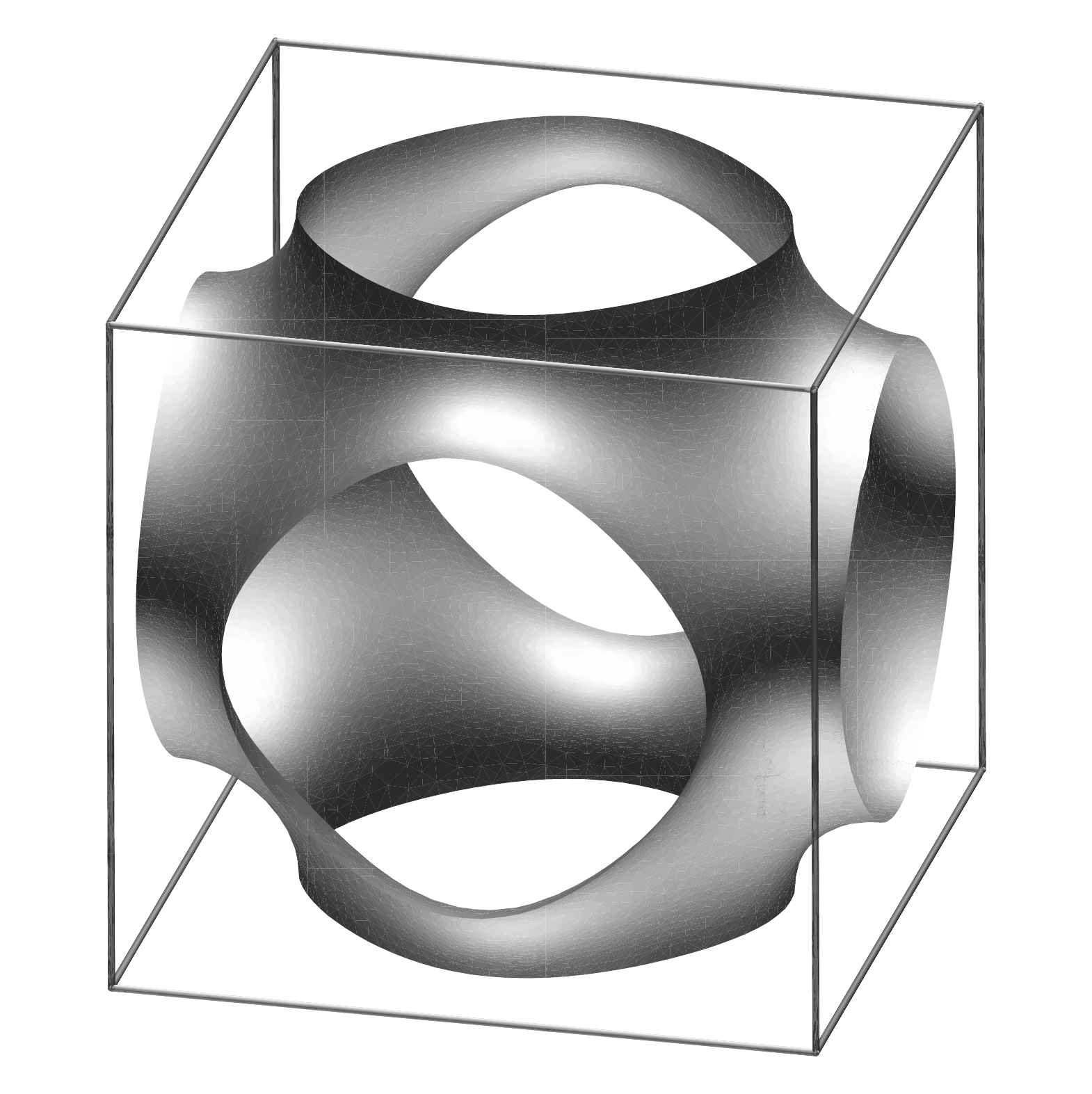}
\\
\includegraphics[width=1.9in]{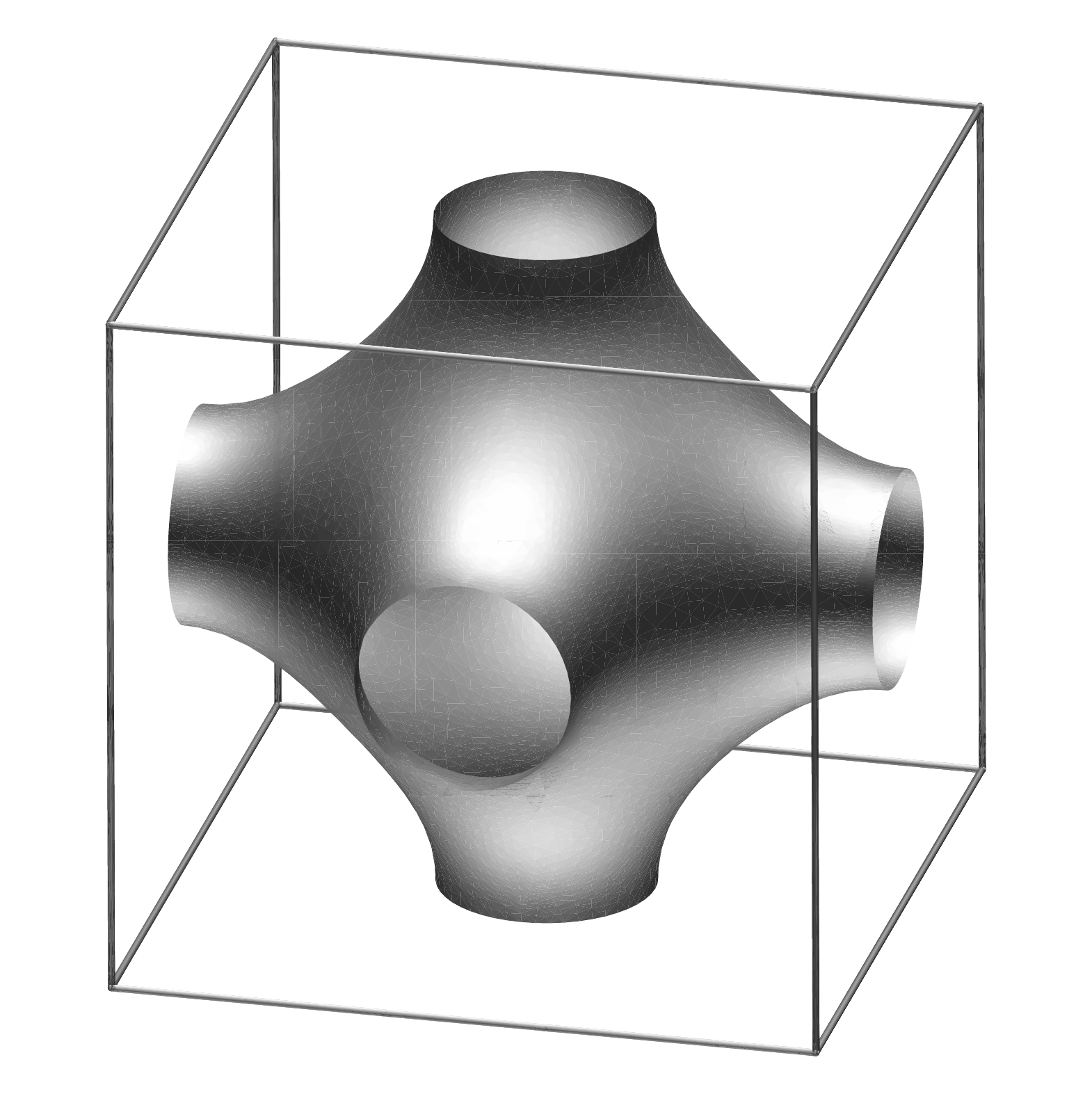}
\includegraphics[width=1.9in]{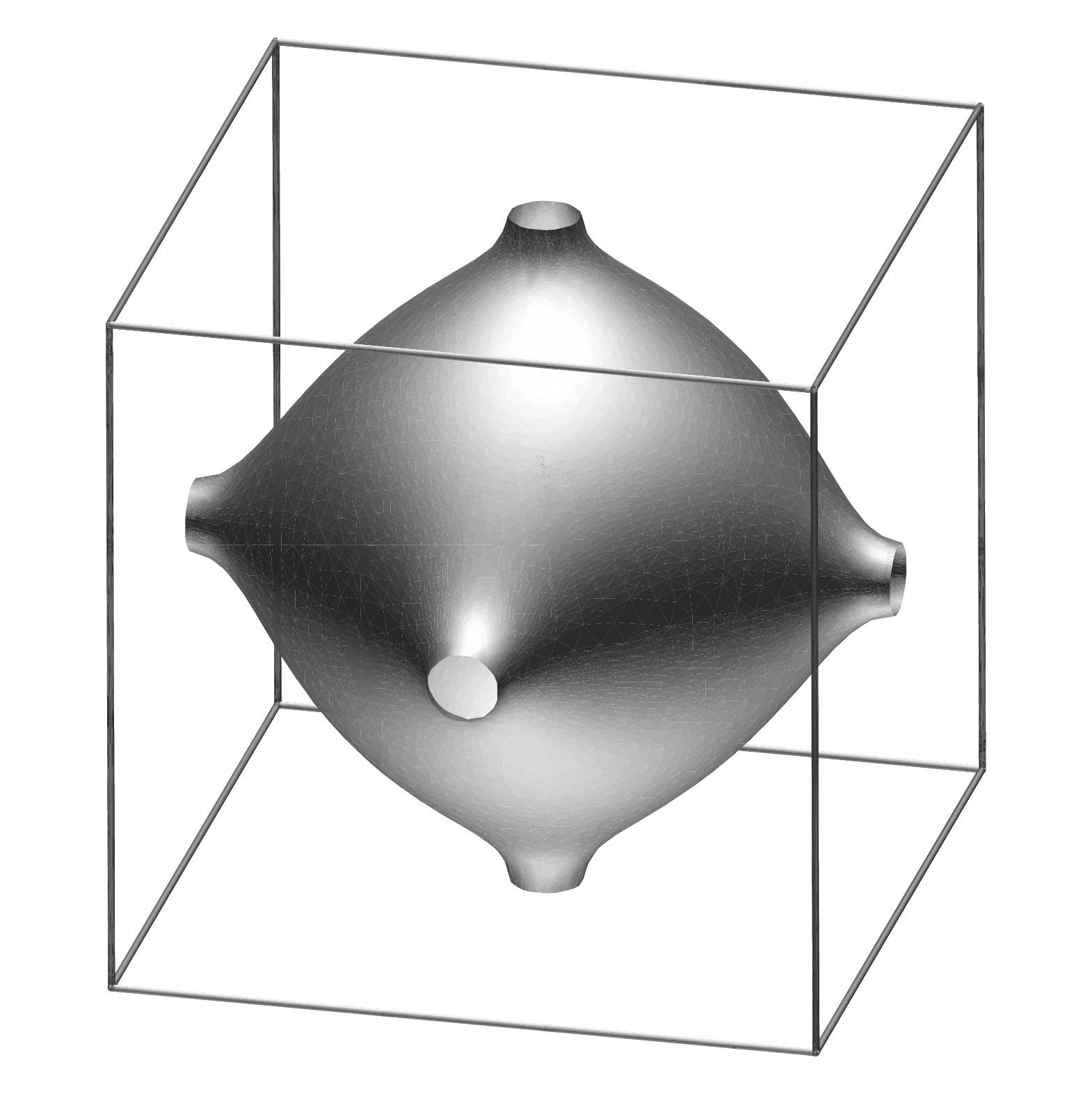}
\end{center}

\caption{Presented above are 4 examples of genus-3 triply periodic constant mean curvature surfaces
in a fundamental region of the 3-torus $\rth/\Z^3$. The two surfaces in the
top row have mean curvature vectors pointing away from the
center of the box. The other two surfaces
have mean curvature vectors pointing towards the center of the box.
These images have been kindly provided by Karsten  Gro\ss e-Brauckmann.}
 \label{star} \end{figure}

\begin{remark} \label{remark}
{\em
By way of examples, it can be shown that the choice of the constant $A(g,a,b,d,I_0)$
in the statement of Theorem~\ref{area}
must depend on the variables $g,a,b,d$, once  $I_0$ is given. We now indicate
without proof what these examples are.
\ben[1.]
\item {\bf  Dependence on an upper bound of $H$:} Every flat 3-torus $T$
admits for any $n$ a collection of
pairwise disjoint geodesic spheres of fixed radius and with total area greater than $n$; the
constant mean curvatures of
these  sphere necessarily goes to infinity as $n$ tends to infinity.
Also note that there exist connected examples that
are flat geodesic cylinders around long closed geodesics in $T$ and that
have arbitrarily large area.
\item {\bf Dependence on a positive lower bound of $H$:}  If $H$ is not
bounded from below, then
the flat 3-torus $T=\rth/\Z^3$ admits  $H_n$-surfaces $M_n$ of genus 3 with
areas greater than $n$ and with $H_n\in (0,1/n)$;
these surfaces can be seen as small deformations of genus-3 minimal
surfaces in $T$ with area greater than
$n$ and geometrically
$M_n$ have the appearance
of being an ``almost totally geodesic'' 2-torus in $T$ and with two attached small ``almost-catenoids,''
where one of these ``almost-catenoids'' is placed at the origin
$\overline{(0,0,0)}\in T$ and the other one is placed near the half-lattice
point $\overline{(1/2,1/2,1/2)}\in T$.
\item {\bf Dependence on an upper bound for the diameter $d$:}
The 3-torus $T_n=\rth/(\Z\times\Z\times n\Z)$
contains intrinsically flat vertical ``cylinders" (flat 2-tori) $C_n$ of "radius 1/3"
and  height $n$ with area $2n\pi /3$.
\item {\bf Dependence on an upper bound for $g$:} In~\cite{mt16}
we construct disconnected
closed $1$-surfaces $M_n$ of area greater than $n$ in some flat 3-torus $T_n$ and such  that
the 3-tori $T_n$ converge to the flat 3-torus  $T=\rth/\Z^3$ as $n$ tends to infinity.
We hope to prove  that the surfaces  $M_n$ can  be chosen to be connected.
\een
}\end{remark}

\section{Outstanding Problems}
 The following outstanding problems are closely related to Theorems~\ref{area} and \ref{seq-compact};
these problems also provided our original motivations for the results in this paper.
It follows from the proofs of Theorems~\ref{area} and \ref{seq-compact} that if $M_n\subset N$ is
a sequence of $H_n$-surfaces satisfying the hypotheses
 in Theorem~\ref{seq-compact}
that converges to the limit surface $M_\infty$ given in its conclusion, then:
{\em Let $q\in N$ be a singular point
of convergence of the $M_n$ to $M_\infty$.  Then for any $\ve>0$ sufficiently small,
there exists an $N_0=N_0(\ve)$ such that for $n\geq N_0$,
$\Sigma_n=\overline{B}_N(q,\ve )\cap M_n$ is a connected compact
surface with two boundary components.}

\begin{conjecture}[Genus-zero Singular Points of Convergence Conjecture] \label{genus-0}
\quad \\
For $\ve>0$ sufficiently small and $n$ sufficiently large, the compact
surface $\Sigma_n =\overline{B}_N(q,\ve )\cap M_n$ is  annulus of total absolute
 Gaussian  curvature $C(\Sigma_n)\in (4\pi-\ve, 4\pi+\ve)$.
\end{conjecture}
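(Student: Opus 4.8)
The plan is to upgrade the two facts already available at a singular point $q$ --- that $\Sigma_n=\ov B_N(q,\ve)\cap M_n$ is connected with two boundary components (the statement preceding the conjecture), and that $q$ is a catenoid singular point (Proposition~\ref{singnogenus}) --- into the full annular picture. By Proposition~\ref{singnogenus} there are scales $\delta_n\to 0$ and points $p_n'\to q$ so that the rescalings $\wt M_n=\frac1{\delta_n}(M_n-p_n')$ converge smoothly on compact subsets of $\rth$ to a catenoid $\cC$, which has total absolute Gaussian curvature exactly $4\pi$ (being minimal with total Gaussian curvature $-4\pi$). I would thus reduce the conjecture to two assertions: that $\Sigma_n$ has genus zero, and that $\int_{\Sigma_n}|K_{M_n}|\to 4\pi$.

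The key device is a pinching of the area-density ratio across all intermediate scales. Writing $\Theta_n(r)=\mathrm{Area}(M_n\cap B_N(p_n',r))/(\pi r^2)$, the catenoid limit gives $\Theta_n(r)\to 2$ at the microscopic scale $r\sim\delta_n$ (a catenoid is asymptotic to a pair of planes), while $M_\infty$ having area density $2$ at $q$ gives $\Theta_n(r)\to 2$ at the macroscopic scale $r\sim\ve$. The almost-monotonicity of $r\mapsto e^{H_n r}\Theta_n(r)$ furnished by the monotonicity formula (as used in Section~\ref{compactness}, where $H_n\le b$ and $\ve$ is small make the exponential factors $\approx 1$) then pins $\Theta_n(r)$ within $o(1)$ of $2$ for every $r\in[c\delta_n,\ve]$, once $\ve$ is small and $n$ large. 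In particular this already forces the catenoid blow-up to occur with multiplicity one, since multiplicity two would produce density $4$; this is consistent with $\Sigma_n$ having exactly two boundary components.

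Granting the density pinching, I would extract the structure of $M_n$ on intermediate scales. Combining $\Theta_n\approx 2$ with the curvature estimates of Theorem~\ref{cest} (valid since $H_n\ge a>0$) shows that, away from an $O(\delta_n)$ neighbourhood of $p_n'$, the surface $M_n$ consists of two nearly-flat graphical annular sheets. One then decomposes $\Sigma_n$ into an inner piece $\{d_N(\cdot,p_n')<R\delta_n\}$, which is $C^2$-close to the annulus $\cC\cap\B(R)$ for $R$ and $n$ large, and an outer piece $\{R\delta_n<d_N(\cdot,p_n')<\ve\}$, which is a disjoint union of two annular graphs; gluing these along the two common circles exhibits $\Sigma_n$ as a single annulus, so $\chi(\Sigma_n)=0$ and the genus is zero. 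For the curvature, the scale-invariance of $\int|K|$ gives that the neck contributes $\int_{\wt M_n\cap\B(R)}|K|\to\int_{\cC\cap\B(R)}|K|\to 4\pi$, while the two graphical sheets contribute $o(1)$ by the density pinching and their near-flatness; hence $\int_{\Sigma_n}|K_{M_n}|\in(4\pi-\ve,4\pi+\ve)$ for $\ve$ small and $n$ large, as claimed.

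The main obstacle is exactly the intermediate-scale (neck) analysis above: ruling out additional concentration of curvature or the formation of a handle at scales between $\delta_n$ and $\ve$. I would treat this by a secondary point-picking argument. If the two-graphical-sheets description failed at some scale $r_n$ with $\delta_n\ll r_n\ll\ve$, rescaling $M_n$ by $r_n^{-1}$ would extract a nontrivial complete embedded minimal limit (the rescaled mean curvature tends to $0$) whose area density is at most $2$ by the pinching of the previous step. By the classification of such limits this limit is either a plane, contradicting the assumed failure of graphicality, or a surface with a genuine neck or handle; but a second neck would create a scale at which $\Theta_n$ departs from $2$, contradicting the pinching, while a surviving handle would yield a positive-genus blow-up, contradicting the fact that $q$ is a catenoid singular point. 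Making this exclusion quantitative and uniform in $n$ is the technical heart of the argument.
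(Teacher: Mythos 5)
This statement is Conjecture~\ref{genus-0}; the paper does not prove it. The authors explicitly present it as open (``We conjecture that for every $q\in\Delta$ there exists $\ve>0$ small such that $B_N(q,\ve)\cap M_n$ is an annulus\dots''), and what they actually establish is strictly weaker: $\Sigma_n$ is connected with two boundary components, the blow-up at the almost-minimal-injectivity-radius scale is a multiplicity-one catenoid (Proposition~\ref{singnogenus}), and $M_\infty$ has density $2$ at $q$. So there is no proof in the paper to compare yours against; the question is whether your sketch closes the gap. It does not.

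The failure is exactly at the step you flag as the ``technical heart,'' and your proposed mechanism for handling it is not sound. You argue that a second neck (or a handle) at an intermediate scale $r_n$ with $\delta_n\ll r_n\ll\ve$ would force the density ratio $\Theta_n$ to depart from $2$. It would not: a catenoidal neck of scale $\delta_n'$ contributes area $O\bigl(\delta_n'^2\log(r/\delta_n')\bigr)$ inside $B(p_n',r)$, which is negligible against the $2\pi r^2$ coming from the two sheets, so $\Theta_n(r)$ remains pinned near $2$ at every scale no matter how many small necks form between the two sheets. Since two planes joined by $k$ necks have genus $k-1$, density pinching alone cannot rule out $\Sigma_n$ having positive genus, which is precisely the content of the conjecture. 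Your fallback --- that a positive-genus blow-up at scale $r_n$ would ``contradict the fact that $q$ is a catenoid singular point'' --- also fails: Proposition~\ref{singnogenus} classifies only the blow-up at the specific scale $\delta_n=I_{M_n}(p_n')$ chosen by the point-picking of Section~\ref{sec5}; it says nothing about rescalings at other scales or at nearby points whose injectivity radius degenerates at a different rate, all of which collapse to the same singular point $q$ in the construction of $\Delta$. A genuinely new multi-scale argument (for instance, a genus- or flux-accounting across all concentration scales at a single point) would be needed, and that is what remains open. Secondary, fixable issues: Theorem~\ref{cest} applies to $H$-disks, so invoking it to get the two-sheeted graphical structure presupposes topological information you have not yet established (an Allard-type sheeting argument from the density bound would be the right tool); and the outer sheets have $\kappa_1+\kappa_2=2H_n\geq 2a$, so they are not ``near-flat'' in the second-fundamental-form sense --- their contribution to $\int|K|$ is $O(\ve^2)$ by the uniform curvature bound and the area bound, which is still $<\ve$ for $\ve$ small, but the reasoning should be stated that way.
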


The next conjecture is motivated by the compactness
result Theorem~\ref{seq-compact}. In contrast to this conjecture, recall that
Traizet~\cite{tra5}
proved that for any positive integer $g\neq 2$ and $n\in \N$, every flat 3-torus contains
an embedded, connected closed minimal
surface of genus $g$ with area greater than $n$.

\begin{conjecture}[Finiteness Conjecture] \label{finite}
For any $H>0$ and $g\in \N\cup \{0\}$, the moduli space of
non-congruent, connected closed $H$-surfaces of at most genus $g$ in
a fixed  flat 3-torus is finite.
\end{conjecture}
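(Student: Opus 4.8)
The plan is to realize the moduli space as a compact real semi-analytic variety and then to show that it is zero-dimensional, so that compactness upgrades to genuine finiteness. Write $\mathcal{M}=\mathcal{M}(H,g,N)$ for the set of connected closed $H$-surfaces in $N$ of genus at most $g$, taken modulo the isometry group $\mathrm{Iso}(N)$; for a flat $3$-torus the identity component of $\mathrm{Iso}(N)$ is the group of translations (which is $N$ acting on itself) and the remaining symmetries form a finite group, so passing to \emph{non-congruent} surfaces exactly removes the three translational degrees of freedom that every $H$-surface automatically carries. First I would record the two inputs that make $\mathcal{M}$ precompact: by Theorem~\ref{area} (with $a=b=H$) every element of $\mathcal{M}$ has area bounded by a fixed constant, and by Theorem~\ref{seq-compact} every sequence has a subsequence converging, away from a finite singular set $\Delta$, to a strongly Alexandrov embedded limit of genus at most $g$, where by Proposition~\ref{singnogenus} the only possible degeneration across $\Delta$ is the formation of catenoidal necks. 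Adjoining these degenerate limits yields a compactification $\overline{\mathcal{M}}$ that is sequentially compact.

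Next I would install the analytic structure, following the Choi--Schoen picture for minimal surfaces in $\langle\esf^3,h\rangle$ mentioned in the Introduction. Near a fixed $M\in\mathcal{M}$ the neighbouring $H$-surfaces are normal graphs $u\,\nu$ over $M$, and the requirement that such a graph again have mean curvature $H$ is a quasilinear elliptic equation whose linearization is the Jacobi operator $L=\Delta_M+|A_M|^2$ acting under the volume constraint $\int_M u=0$. Since the mean-curvature operator depends real-analytically on $u$, a Lyapunov--Schmidt reduction identifies a neighbourhood of $[M]$ in $\mathcal{M}$ with the zero set of a real-analytic map between finite-dimensional spaces modelled on $\ker L$. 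Consequently $\mathcal{M}$, together with the boundary strata produced by the catenoidal degenerations, endows $\overline{\mathcal{M}}$ with the structure of a compact real semi-analytic variety. By the Lojasiewicz structure theorem such a set has finitely many connected components and is triangulable; in particular $\overline{\mathcal{M}}$ has finitely many components.

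The conclusion of Conjecture~\ref{finite} would then reduce to proving that $\overline{\mathcal{M}}$ is \emph{zero-dimensional}, since a compact $0$-dimensional semi-analytic set is finite. This is the crux, and I expect it to be the main obstacle. It amounts to a \textbf{rigidity} (non-degeneracy) statement: for every $M\in\mathcal{M}$ the space of volume-preserving Jacobi fields $\ker L$ should consist only of the restrictions of the ambient Killing fields, which on a flat torus are the constant translations and hence contribute nothing after quotienting by $\mathrm{Iso}(N)$. There is no soft reason this must hold, since closed CMC surfaces can a priori support genuine Jacobi fields, and establishing it requires real control of the second variation on these surfaces.

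To attack the rigidity I would try to show that any nontrivial bounded Jacobi field integrates, via the implicit function theorem on the non-degenerate part, to a one-parameter family of pairwise non-congruent $H$-surfaces of genus at most $g$; by the compactness of $\overline{\mathcal{M}}$ such a family must accumulate, and the only way to leave $\mathcal{M}$ is through catenoidal pinching. One would then hope to contradict this using the monotonicity and density estimates of Section~\ref{compactness} — in particular the fact that a catenoid singular point carries area density exactly $2$ — together with Proposition~\ref{singnogenus}, thereby forcing the family to be constant. Controlling the boundary strata of $\overline{\mathcal{M}}$, that is, ruling out positive-dimensional families of degenerate strongly Alexandrov embedded limits, is where the genuine difficulty concentrates, and I would expect this rigidity to require substantially new analytic input beyond the estimates already established here.
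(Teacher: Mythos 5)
The statement you set out to prove is not a theorem of the paper at all: it appears in Section~12 as Conjecture~\ref{finite}, the Finiteness Conjecture, explicitly listed among the ``Outstanding Problems.'' The paper offers no proof; the compactness results (Theorems~\ref{area} and~\ref{seq-compact}) are presented only as motivation for it. So there is no argument in the paper to match yours against, and what you have written must be judged as a strategy for an open problem rather than as a reconstruction of a known proof.

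As a strategy, you correctly assemble the available ingredients --- the area bound of Theorem~\ref{area} with $a=b=H$, the compactness of Theorem~\ref{seq-compact}, Proposition~\ref{singnogenus} identifying the degenerations as catenoidal, and the semi-analytic structure alluded to in the Introduction --- and you correctly locate where the difficulty lies. But the gap you flag is not a technical loose end; it is the entire content of the conjecture. First, the rigidity statement you need (that every volume-preserving Jacobi field is generated by ambient translations) cannot hold pointwise for every $M$: closed $H$-surfaces in flat 3-tori are known to exhibit bifurcation phenomena (for instance along the families of genus-3 triply periodic examples of the type pictured in Figure~\ref{star}), and at a bifurcation point the kernel of $L=\Delta_M+|A_M|^2$ under the volume constraint is strictly larger than the span of the translational fields. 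So at best one could hope to exclude positive-dimensional \emph{components} of the variety, which is a global statement that pointwise non-degeneracy arguments do not reach. Second, your proposed mechanism for deriving a contradiction does not work as stated: a nontrivial element of $\ker L$ need not integrate to an actual one-parameter family of $H$-surfaces (the obstruction lives in higher-order terms of the Lyapunov--Schmidt reduction), and even when it does, the family may remain entirely within the smooth part of $\mathcal{M}$ and accumulate on a smooth, non-degenerate limit, so there is no catenoidal pinching to play the density estimates of Section~\ref{compactness} against. Those estimates control the local area density at a singular point of convergence of a sequence; they say nothing about a continuously varying family of embedded $H$-surfaces. In short, your reduction of the conjecture to a rigidity/zero-dimensionality statement is a sensible way to organize the problem, but that rigidity is precisely what is unknown, and the proposal does not prove the statement.
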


\begin{definition}{\em
A complete $H$-surface in a Riemannian 3-manifold $X$ is said to have {\em locally
finite genus} if for every point $p\in X$, there exists an $\ve_p>0$ such that
the genus of $M\cap B_X(p,\ve_p)$ is bounded. If for some $\ve>0$, the upper bound $U$
on the genus of $M\cap B_X(p,\ve)$ is independent of the point $p$, then we say that
$M$ has {\em $\ve$-uniformly bounded genus with bound $U$}.
}
\end{definition}

\begin{conjecture}[Embedded Calabi-Yau Problem for Locally Finite Genus]
Let $M$ be
a complete $1$-surface in $\rth$.
\ben
\item $M$ is proper in $\rth$ if and only if it has locally bounded genus in $\rth$. Furthermore, this same
properness result holds for
complete, non-planar {\em minimal} surfaces embedded in $\rth$.

\item Given $\ve,U>0$, there exists  $A(\ve,U)>0$ such that if $M$ has $\ve$-uniformly bounded genus
with bound $U$,
then, for all $p\in \rth$,
$$\mathrm{Area}(M\cap \B(p,\ve))\leq A(\ve, U).$$
%
\een
\end{conjecture}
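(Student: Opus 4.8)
The plan is to approach part~(1) by separating the two implications and to obtain part~(2) by localizing the proof of Theorem~\ref{area2} to a Euclidean ball. The implication that properness forces locally bounded genus is immediate: if $M$ is proper then $M\cap\overline{\B}(p,\ve)$ is compact for every $p$ and $\ve$, hence is a compact surface with boundary and so has finite genus. The substance is the converse, that locally bounded genus forces properness, which I would prove by contradiction. So assume $M$ is a complete embedded $1$-surface of locally bounded genus that is \emph{not} proper. Then there is a point $x_0\in\rth$ and a number $\ve_0>0$ with $\mathrm{genus}(M\cap\B(x_0,\ve_0))\le G_0<\infty$, while sheets of $M$ accumulate at $x_0$; that is, there are points $q_n\in M$ with $q_n\to x_0$ that leave every intrinsically compact subset of $M$.

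First I would dispose of the case that $|A_M|$ is bounded on $M\cap\B(x_0,\ve_0)$. In that case each sheet carries a one-sided regular neighborhood on its mean convex side of a uniform size depending only on the curvature bound (Corollary~\ref{corinj} and~\cite{mr8}), which excludes other sheets from accumulating on that side, while the mean curvature comparison principle of Ros--Rosenberg~\cite{ror1} forbids a limit leaf from lying on the mean convex side of the nearby sheets on the other side; since $H=1>0$ there is no nontrivial $H$-lamination, so no genuine accumulation occurs and $M$ is proper near $x_0$, a contradiction. Hence $|A_M|$, equivalently the injectivity radius $I_M$ through Theorem~\ref{cest}, must degenerate: there exist $p_n\to x_0$ with $I_M(p_n)\to0$. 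Rescaling by $\lambda_n=1/I_M(p_n)$ exactly as in Section~\ref{sec5}, the dilated surfaces have mean curvature $1/\lambda_n\to0$ and unit injectivity radius at the base point, so by Proposition~\ref{limitsurface} they converge to a catenoid or to a properly embedded minimal surface of positive genus. Each blow-up contains a neck, and the corresponding loop has nonzero flux in the catenoid case (Definition~\ref{def:flux}), hence is homotopically nontrivial in $M$. Since each positive-genus blow-up absorbs at least one unit of the genus of $M\cap\B(x_0,\ve_0)$, at most $G_0$ of the accumulation scales can be of positive-genus type, so infinitely many are catenoid necks, each producing an essential simple closed curve of nonzero flux concentrated at scale $I_M(p_n)\to0$.

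Because $x_0$ is a point of non-properness, this yields infinitely many essential loops $\{\gamma_k\}$, pairwise disjoint and at mutually separated scales inside the bounded region $\B(x_0,\ve_0)$. Since $\mathrm{genus}(M\cap\B(x_0,\ve_0))\le G_0$, Lemma~\ref{genus} guarantees that among any $3G_0-1$ of the $\gamma_k$ some pair bounds an annulus in $M$; iterating, I obtain infinitely many pairwise disjoint annuli in $M\cap\B(x_0,\ve_0)$ whose boundary pairs are catenoid loops. To each such annulus the Alexandrov-reflection argument of Section~\ref{sec6} associates a mean convex region of volume at least a fixed $\ve_1>0$, and distinct annuli give disjoint regions; as these all lie in $G_M\cap\B(x_0,\ve_0')$ for a slightly larger ball of finite volume, infinitely many of them is impossible. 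This contradiction proves properness and settles part~(1) for $H=1$. The parenthetical assertion for complete non-planar embedded \emph{minimal} surfaces fits the same scheme, but the Ros--Rosenberg step and the finite-volume count must be replaced by Colding--Minicozzi one-sided curvature and minimal lamination theory; this needs input beyond the estimates quoted here and I would isolate it as a separate reduction. I expect the main obstacle in part~(1) to be precisely the bounded-curvature step, namely a clean proof that positive-$H$ sheets of bounded curvature cannot accumulate, together with verifying that the blow-up loops remain genuinely essential and disjoint back in $M$.

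For part~(2) I would argue by contradiction and localize Theorem~\ref{area2}. If no bound $A(\ve,U)$ existed, there would be complete embedded $1$-surfaces $M_k$ of $\ve$-uniformly bounded genus $\le U$ and points $p_k$, which I translate to the origin, with $\mathrm{Area}(M_k\cap\B(0,\ve))\to\infty$; by part~(1) each $M_k$ is proper, hence separates $\rth$ and bounds a mean convex side, with a one-sided regular neighborhood away from small-injectivity-radius points. If the injectivity radii of $M_k$ on $\B(0,\ve)$ stayed bounded below, then Theorem~\ref{cest} and the neighborhood-volume bound of~\cite{mt3} would give $\mathrm{Area}(M_k\cap\B(0,\ve))\le A_0\,\mathrm{Volume}(\B(0,2\ve))$ as in the proof of Claim~\ref{injec}, a contradiction; so a finite singular set $\Delta\subset\overline{\B}(0,\ve)$ forms. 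Arguing as in Sections~\ref{sec5}--\ref{secboundsing}, by the density argument of Proposition~\ref{singnogenus} at most $U$ points of $\Delta$ are of positive-genus type, and the number of catenoid singular points is at most $\mathrm{Volume}(\B(0,2\ve))/\ve_1\cdot(3U-2)$, since the associated Alexandrov regions are disjoint and of volume $\ge\ve_1$ inside a ball of finite volume. Finally the monotonicity-formula estimate of Section~\ref{sec:final-cont} bounds $\mathrm{Area}(M_k\cap\B(0,\ve))$ by $(|\Delta|+1)(K(\delta)+1)$ for a fixed small $\delta$, contradicting $\mathrm{Area}(M_k\cap\B(0,\ve))\to\infty$ and producing the desired $A(\ve,U)$. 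The only genuinely new point relative to the torus case is replacing $\mathrm{Volume}(N)$ by $\mathrm{Volume}$ of a fixed ball and $g$ by $U$, together with the fact, supplied by part~(1), that the limit object is a proper surface bounding a mean convex side.
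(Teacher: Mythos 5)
You are attempting to prove a statement that the paper itself poses as an \emph{open problem}: this is Conjecture~[Embedded Calabi--Yau Problem for Locally Finite Genus] in the final section, and the paper contains no proof of it, so the only question is whether your proposal actually closes it. It does not, and the central defect is a circularity: essentially all of the machinery you import from Sections~4--8 --- the Plateau barrier argument of Claim~\ref{nontriv}, the Alexandrov reflection construction of Section~\ref{sec6} with its mean convex region $W_n$ and volume-$\ve_1$ domains, and the local area bound of Proposition~\ref{area51} --- is available there only because the surfaces are closed, hence properly embedded, separate the ambient space, and bound a mean convex region $G_M$; that structure is supplied by item~1 of Theorem~\ref{separate}, which in turn requires positive injectivity radius (equivalently, by Corollary~\ref{corinj}, a \emph{global} curvature bound and properness). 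In part~(1) properness is precisely what you are trying to prove; until you have it there is no $G_M$ in which to solve the Plateau problem, no mean convex side for the reflection argument, and no region whose volume you can count. Your bounded-curvature case has the same flaw in miniature: Ros--Rosenberg~\cite{ror1} and the one-sided regular neighborhood of~\cite{mr8} are statements about complete or proper surfaces with globally bounded second fundamental form, whereas at a putative point of non-properness you only control $|A_M|$ on $M\cap\B(x_0,\ve_0)$, and a local limit leaf of an accumulating lamination is not complete, so the nonexistence of complete stable $H$-surfaces cannot be invoked directly. You flag this step yourself as ``the main obstacle,'' which is an acknowledgement that it is missing, not proved.

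A second concrete failure is your use of Lemma~\ref{genus}, which is stated and proved only for \emph{closed} surfaces: its proof runs through $\chi(\Sigma)=2-2g$ and the dichotomy that every complementary component is either an annulus or has $\chi\le -1$. For your complete non-compact $M$, or for $M\cap\B(x_0,\ve_0)$ with boundary, infinitely many pairwise disjoint essential loops need not pair off into compact annuli in $M$: complementary components can be half-open annular neighborhoods of ends or boundary-parallel strips, so the chain ``infinitely many catenoid loops $\Rightarrow$ infinitely many disjoint annuli $\Rightarrow$ infinitely many disjoint regions of volume at least $\ve_1$'' breaks at its first link, and with it the contradiction that was to yield properness. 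Part~(2) inherits both defects, since it leans on part~(1) for separation and on the same annulus count to bound the singular set; in addition, Proposition~\ref{area51} requires the portion of $M_k$ inside the ball to bound a mean convex domain \emph{of the ball}, which is not automatic for a slab of a complete surface crossing $\partial\B(0,\ve)$. In short, the proposal reassembles the paper's toolkit, but the toolkit's hypotheses (properness, separation, mean convexity, closedness) are exactly what the conjecture asks one to establish; the statement remains open, and your outline does not supply the genuinely new ideas --- a properness mechanism independent of $G_M$, and a genus-pairing lemma valid for open surfaces --- that it would require.
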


\begin{remark}{\em The area estimates  given in
Theorem~\ref{area} should hold in the following more general context.
Let $N$ be a closed orientable Riemannian 3-manifold.
Given positive numbers $a\leq b$ and $g\in \N\cup \{0\}$, there exists
positive number  $A(g,a,b)$ depending only on $g,a,b$ and $N$
such that the areas of a closed $H$-surfaces $M$ with genus $g$
and $H\in [a,b]$ is less than $A(g,a,b)$, under the assumption that
$M$ is the oriented boundary of a subdomain of $N$.  In particular, if $N$
is a Riemannian homology $3$-sphere, then there is an
area estimate
for connected, closed $H$-surfaces $M$ with fixed finite genus $g$ and $H\in [a,b]$.
This generalization is work in progress in~\cite{mt17}}.
\end{remark}
\vspace{.4cm}

\center{William H. Meeks, III at profmeeks@gmail.com}\\
Mathematics Department, University of Massachusetts, Amherst, MA
01003
\center{Giuseppe Tinaglia at giuseppe.tinaglia@kcl.ac.uk}\\
Department of Mathematics, King's College London,  London, WC2R 2LS, U.K.

\bibliographystyle{plain}
\bibliography{bill}
\end{document}